\numberwithin{equation}{section}
\newtheorem{thm}{Theorem}[section]
\newtheorem{prop}[thm]{Proposition}
\newtheorem{lemma}[thm]{Lemma}
\newtheorem{rmk}[thm]{Remark}
\newtheorem{ex}[thm]{Example}
\begin{document}

\title{Bi--invariant metrics on contactomorphism groups}

\author{Sheila Sandon}
\address{Institut de Recherche Math\'{e}matique Avanc\'{e}e\\
Universit\'{e} de Strasbourg - France}
\email{sandon@math.unistra.fr}

\begin{abstract}
\noindent
Contact manifolds are odd--dimensional smooth manifolds endowed with a maximally non--integrable field of hyperplanes. They are intimately related to symplectic manifolds, i.e. even--dimensional smooth manifolds endowed with a closed non--degenerate 2--form. Although in symplectic topology a famous bi--invariant metric, the Hofer metric, has been studied since more than 20 years ago, it is only recently that some somehow analogous bi--invariant metrics have been discovered on the group of diffeomorphisms that preserve a contact structure. In this expository article I will review some constructions of bi--invariant metrics on the contactomorphism group, and how these metrics  are related to some other global rigidity phenomena in contact topology which have been discovered in the last few years, in particular the notion of orderability (due to Eliashberg and Polterovich) and an analogue in contact topology (due to Eliashberg, Kim and Poltorovich) of Gromov's symplectic non-squeezing theorem.
\end{abstract}

\maketitle


\section{Introduction}

Given a group $G$, a function $d: G \times G \rightarrow [0,\infty)$ is said to be a \emph{bi--invariant metric} if it satisfies the following properties:
\begin{itemize}
\item $d(g_1,g_2) \geq 0$, with equality if $g_1 = g_2$ (positivity);
\item if $d(g_1,g_2) = 0$ then $g_1 = g_2$ (non--degeneracy);
\item $d(g_1,g_2) = d(g_2,g_1)$ (symmetry);
\item $d(g_1,g_3) \leq d(g_1,g_2) + d(g_2,g_3)$ (triangle inequality); 
\item $d(fg_1,fg_2) = d(g_1,g_2) = d(g_1f,g_2f)$ (bi--invariance).
\end{itemize}
We say that $d$ is a \emph{bi--invariant pseudometric} if it satisfies all the above properties except possibly for non--degeneracy. Any bi--invariant (pseudo)metric $d$ on $G$ induces a \emph{conjugation--invariant (pseudo)norm} $\lVert \, \cdot \, \rVert: G \rightarrow [0,\infty)$ by posing $\lVert g \rVert = d(g,\text{id})$. Conversely, any conjugation--invariant (pseudo)norm $\lVert \, \cdot \, \rVert$ on $G$ induces a bi--invariant (pseudo)metric $d$ by posing $d(g_1,g_2) = \lVert g_1^{-1}g_2 \rVert$. In the following we will use these two notions interchangeably. Two conjugation--invariant (pseudo)\linebreak norms on the same group are said to be equivalent if their ratio is bounded and bounded away from zero.

In the present article we will be interested in bi--invariant metrics on the identity component of the group of contactomorphisms of a contact manifold $(M,\xi)$, i.e. the group of diffeomorphisms of $M$ that preserve the contact distribution $\xi$. Note that on the diffeomorphism group of a smooth manifold there exist several bi--invariant metrics but, as discussed by Burago, Ivanov and Polterovich \cite{BIP}, they all seem to be always equivalent to the trivial metric. On the other hand, in 1990 Hofer discovered \cite{Hofer} a remarkable bi--invariant metric on the group of Hamiltonian diffeomorphisms of any symplectic manifold. Since its discovery the Hofer metric has been the object of much research, especially in relation to other global rigidity phenomena in symplectic topology such as for example Gromov's famous non--squeezing theorem \cite{Gromov}. Another bi--invariant metric on the Hamiltonian group, related but not equivalent to the Hofer metric, was later defined by Viterbo \cite{Viterbo} for the standard symplectic Euclidean space $\mathbb{R}^{2n}$ and extended by  Schwarz \cite{Schwarz} and Oh \cite{Oh} to more general symplectic manifolds.

Although, as we will review in Section \ref{section: Contact and symplectic manifolds}, contact manifolds are deeply related to symplectic manifolds, until recently no bi--invariant metrics were known on the group of contactomorphisms. The first such bi--invariant metric was discovered in 2010 \cite{mio 2} for the manifold $\mathbb{R}^{2n} \times S^1$ with its standard contact structure, as a generalization to contact topology of the Viterbo metric. Other constructions of bi--invariant metrics were later given for more general contact manifolds by Zapolsky \cite{Zapolsky}, Fraser, Polterovich and Rosen \cite{FPR} and Colin and the present author \cite{CS}. For many aspects these bi--invariant metrics on the contactomorphism group can be seen as analogues of the Hofer and Viterbo metrics on the Hamiltonian group of a symplectic manifold. On the other hand, as we will discuss, they also present some specificities that make them still quite mysterious and not so well understood.

In this expository article we will review the known constructions of bi--invariant metrics on the contactomorphism group and discuss their relation to other surprising contact rigidity phenomena such as the notion of orderability, that was introduced in 2000 by Eliashberg and Polterovich \cite{EP00}, and a contact analogue, discovered in 2006 by Eliashberg, Kim and Polterovich \cite{EKP}, of Gromov's symplectic non--squeezing theorem. In the next section we will give some preliminaries on contact manifolds, symplectic manifolds and their interactions. In Section \ref{section: symplectic rigidity} we will discuss the Hofer and Viterbo bi--invariant metrics on the Hamiltonian group of a symplectic manifold, and their relation to other global rigidity results in symplectic topology. In Section \ref{section: Hofer-like lengths} we will show why the formula that defines the Hofer metric does not give rise in the contact case to a bi--invariant metric, and in Section \ref{section: Rigidity and flexibility in contact topology} we will prove, following Polterovich \cite{Polterovich - private}, that on the contactomorphism group it is actually impossible to have a conjugation--invariant norm that takes values arbitrarily close to zero (as it is the case for the Hofer and Viterbo metrics on the Hamiltonian group). As we will see, the impossibility to have such a bi--invariant metric on the contactomorphism group is due to the fact that in the Euclidean space $\mathbb{R}^{2n+1}$ with its standard contact structure it is possible to squeeze any given domain into an arbitrarily small one. On the other hand, in Section \ref{section: Rigidity and flexibility in contact topology} we will also discuss a contact non--squeezing phenomenon on the contact manifold $\mathbb{R}^{2n} \times S^1$, a result that was discovered by Eliashberg, Kim and Polterovich \cite{EKP} and motivated the search for \emph{discrete} bi--invariant metrics on the contactomorphism group. In Section \ref{section: spectral metric} we will review, following \cite{mio 2}, how the construction of the Viterbo metric on the Hamiltonian group of $\mathbb{R}^{2n}$ can be naturally generalized to the contact case, giving rise to an \emph{integer--valued} bi--invariant metric on the contactomorphism group of $\mathbb{R}^{2n} \times S^1$. As we will see the key idea is that, in the contact case, \emph{translated points} of contactomorphims play the role that is played in the symplectic case by fixed points of Hamiltonian symplectomorphims. Following \cite{CS}, in Section \ref{section: discriminant metric} we will then discuss how this idea can be used to define an integer--valued bi--invariant metric (the \emph{discriminant metric}) on the universal cover of the contactomorphism group of any contact manifold. In Section \ref{section: orderability and the oscillation metric} we will review the notion of \emph{orderability} in contact topology, which has been introduced in 2000 by Eliashberg and Polterovich \cite{EP00}, and (still following \cite{CS}) discuss how to combine this notion with the discriminant metric in order to define what we call the \emph{oscillation metric}. We will conclude by indicating in Section \ref{section: Discussion} some open questions and future directions of research.

\subsection*{Acknowledgments}  This article has been written in occasion of the 5-th IST-IME meeting in S\~{a}o Paulo in honor of Prof. Orlando Lopes. I would like to thank the organizers for inviting me to participate to such a pleasant and interesting event. I also would like to thank the referee for several corrections and useful comments. The research on which the article is based has been carried out from 2005 to 2009 at the Instituto Superior T\'{e}cnico in Lisbon, in 2010 and 2011 at the Laboratoire de Math\'{e}matiques Jean Leray in Nantes, in the spring semester of 2012 at the Institute for Advanced Study in Princeton, from September 2012 to August 2014 at the UMI--CNRS of the Centre de Recherches Math\'{e}matiques in Montr\'{e}al and since September 2014 at the Institut de Recherche Math\'{e}matique Avanc\'{e}e in Strasbourg. Since January 2014 I am supported by the ANR grant COSPIN. I would like to thank all members of this project for very interesting discussions about bi--invariant metrics and related topics.


\section{Contact and symplectic manifolds}\label{section: Contact and symplectic manifolds}

In this section we review some basic notions in symplectic and contact topology. More material on these subjects can be found for example in the books of Geiges \cite{Geiges}, McDuff and Salamon \cite{MS}, Hofer and Zehnder \cite{HZ} and Cieliebak and Eliashberg \cite{CE}.

A \textbf{contact manifold} is a smooth odd--dimensional manifold $M^{2n+1}$ endowed with a field of hyperplanes $\xi$ (called the \emph{contact structure}) which is co--oriented and maximally non--integrable. These two conditions mean that  $\xi$ can be written as the kernel of a 1--form $\alpha$ (which is called a \emph{contact form}) such that $\alpha \wedge (d\alpha)^n$ is a volume form. Note that the fact that $\alpha \wedge (d\alpha)^n$ is a volume form can equivalently be expressed by saying that $d\alpha \rvert_{\xi}$ is non--degenerate, which is the opposite of the Frobenius integrability condition (recall that, by the Frobenius theorem, if a distribution $\xi$ can be written as the kernel of a 1--form $\alpha$ then $\xi$ is integrable if and only if $d\alpha \rvert_{\xi}$ vanishes identically). 

The standard example of a contact manifold is the Euclidean space $\mathbb{R}^{2n+1}$ with the contact structure given by 
$$
\xi_0 = \text{ker} \big(dz + \sum_{i=1}^n x_idy_i - y_idx_i\big)\,.
$$
Note that the distribution $\xi_0$ is invariant by translations in the $z$--direction. It is easy to visualize it for example in dimension $3$: at the origin it is the horizontal plane $\text{ker}(dz)$, while when we move along a line through the origin the distribution twists, and becomes vertical at $\pm \infty$.  By the Darboux theorem, the contact manifold $(\mathbb{R}^{2n+1},\xi_0)$ is the local model for any other contact manifold of the same dimension, in the sense that any point of any $(2n+1)$--dimensional contact manifold has a neighborhood that is diffeomorphic to a neighborhood of the origin of $(\mathbb{R}^{2n+1},\xi_0)$. Thus, \emph{locally} all contact manifolds of the same dimension are equivalent. On the other hand an important topic of research since the work of Bennequin \cite{Bennequin} is to find examples of contact manifolds that are \emph{globally} non--equivalent (even in cases when the underlying smooth manifolds are diffeomorphic, and the contact structures are in the same homotopy class as hyperplane fields) and to construct global invariants capable to distinguish them. In the present article we will not deal with the problem of classification of contact structures, but we will discuss instead some flexibility and rigidity phenomena\footnote{Flexibility and rigidity in contact topology appear in several a priori quite different flavors. See for example \cite{Eliashberg - Flexibility} for a survey of recent discoveries on the flexibility side, in particular related to the notion of \emph{loose Legendrian submanifolds} \cite{Murphy} and \emph{overtwisted} contact manifolds \cite{BEM}. It is still an open question to understand whether these notions of flexibility interact in some interesting way with the global rigidity phenomena that are discussed in this paper. For example, a concrete question in this direction would be to understand whether or not any bi--invariant metric on the contactomorphism group of an overtwisted contact manifold is necessarily equivalent to the trivial metric.} for diffeomorphisms that preserve the contact structure on some fixed contact manifold. As we will see, the global rigidity phenomena that we will discuss are reminiscent of analogue global rigidity phenomena in symplectic topology. 

A \textbf{symplectic manifold} is an even--dimensional smooth manifold $W^{2n}$ endowed with a non--degenerate closed 2--form $\omega$ (which is called a \emph{symplectic form}). The standard example of a symplectic manifold is the Euclidean space $\mathbb{R}^{2n}$ with the symplectic form
$$
\omega_0 = \sum_{i=1}^n dx_i \wedge dy_i\,.
$$
By the Darboux theorem, all $(2n)$--dimensional symplectic manifolds are locally equivalent to a neighborhood of the origin in $(\mathbb{R}^{2n},\omega_0)$. On the other hand, starting from the 80's some surprising global rigidity phenomena has been discovered in symplectic topology. We will review three of them in Section \ref{section: symplectic rigidity}.

Contact manifolds are intimately related to symplectic manifolds. Suppose in particular that $(W,\omega)$ is an \emph{exact} symplectic manifold, i.e. $\omega = d\lambda$ for some 1--form $\lambda$. Let $Y_{\lambda}$ be the \emph{Liouville vector field} associated to $\lambda$, i.e. the vector field defined by the relation $\iota_{Y_{\lambda}}\omega = \lambda$. Then any hypersurface $M$ of $W$ which is transverse to $Y_{\lambda}$ is a contact manifold, with contact form $\alpha := \lambda \rvert_M$. 

\begin{ex}
The symplectic Euclidean space $(\mathbb{R}^{2n},\omega_0)$ is exact, since $\omega_0 = d\lambda_0$ with $\lambda_0 = \frac{1}{2} \sum_{i=0}^n x_i \frac{\partial}{\partial x_i} + y_i \frac{\partial}{\partial y_i}$. The Liouville vector field $Y_{\lambda_0}$ spans the radial direction and so it is transverse to the unit sphere $S^{2n-1}$. The kernel of the 1--form   $\alpha_0 := \lambda_0\rvert_{S^{2n-1}}$ is said to be  the standard contact structure on $S^{2n-1}$. 
\end{ex}

Another way to obtain contact manifolds from symplectic manifolds is as follows. We start again with an exact symplectic manifold $(W,\omega= d\lambda)$, but instead of taking a hypersurface we now add a dimension and consider the manifold $P = W \times \mathbb{R}$ (or $W \times S^1$). Then the distributon $\xi =\text{ker}(dz - \lambda)$, where $z$ is the coordinate in the $\mathbb{R}$-- (or $S^1$--)direction, is a contact structure on $P$. The contact manifold $(P,\xi)$ is called the \emph{prequantization}\footnote{This construction can be generalized to all symplectic manifolds $(W,\omega)$ with $[\omega] \in H^2(W;\mathbb{Z}_2)$, to define a contact structure on the principal $S^1$--bundle over $W$ with Euler class $[\omega]$. See Boothby and Wang \cite{BW} or \cite[Section 7.2]{Geiges}.}, or \emph{contactification}, of $(W,\omega)$. 

\begin{ex}
The contact Euclidean space $(\mathbb{R}^{2n+1},\xi_0)$ is the prequantization of the standard symplectic Euclidean space $(\mathbb{R}^{2n},\omega_0)$. 
\end{ex}

\begin{ex}
The cotangent bundle $T^{\ast}B$ of a smooth manifold $B$ has a canonical exact symplectic form $\omega_{\text{can}} = - d \lambda_{\text{can}}$, where the Liouville form $\lambda_{\text{can}}$ is defined by $\lambda_{\text{can}}(X) = \sigma \big(\pi_{\ast}(X)\big)$ for a vector $X$ in $T_{(q,\sigma)} (T^{\ast}B)$. The prequantization of $T^{\ast}B$ is the 1--jet bundle $J^1B = T^{\ast}B \times \mathbb{R}$ (with the induced contact form $dz - \lambda_{\text{can}})$.
\end{ex}

A third link between symplectic and contact topology is given by the fact that, for a contact manifold $(M,\xi)$ with contact form $\alpha$, the 2--form $d\alpha$ is a symplectic form on the contact distribution $\xi$. In other words, $\xi$ can be seen as a symplectic vector bundle over $M$. On the other hand if we look at the 2--form $d\alpha$ on the whole tangent bundle $TM$ then we see that, for reasons of linear algebra, $d\alpha$ must have a 1--dimensional kernel. This 1--dimensional kernel is called the \emph{Reeb direction} associated to the contact form $\alpha$. We define the \textbf{Reeb vector field} $R_{\alpha}$ to be the vector field that spans the Reeb direction and is normalized by the condition $\alpha(R_{\alpha}) = 1$. The flow of $R_{\alpha}$ preserves the contact structure $\xi$ (in fact, it even preserves the contact form $\alpha$). 

\begin{ex}
The Reeb flow associated to the standard contact forms in $\mathbb{R}^{2n+1}$ and $J^1B$ is given by translation in the z--direction. On the sphere $S^{2n-1}$, the Reeb flow associated to the standard contact form is given by rotation along the Hopf fibers.
\end{ex}

We emphasize that the Reeb vector field $R_{\alpha}$  on a contact manifold $\big(M,\xi = \text{ker}(\alpha)\big)$ depends on the choice of a contact form $\alpha$ for $\xi$ (note that such choice is not unique, indeed if $\alpha$ is a contact form for $\xi$ then so is $e^f\alpha$ for any function $f$ on $M$). Once we choose a contact form $\alpha$ for $\xi$, we can decompose the tangent bundle of $M$ as the direct sum
\begin{equation}\label{equation: decomposition of TM}
TM = \xi \;\oplus \left< \, R_{\alpha} \, \right>\,.
\end{equation}
A diffeomorphism $\phi$ of $M$ is said to be a \emph{contactomorphism} if it preserves the contact structure $\xi$, i.e. $\phi_{\ast} \xi = \xi$ or in other words $\phi^{\ast} \alpha = e^g \alpha$ for some function $g$ on $M$. Note that, since a contactomorphism is not required to preserve the contact form, in general it does not preserve the associated Reeb vector field. Thus, in terms of the decomposition (\ref{equation: decomposition of TM}), a contactomorphism preserves the first direct summand but not necessarily the second. As we have seen, the first direct summand is a symplectic object (the 2--form $d\alpha$ is a symplectic form on $\xi$) while from the point of view of symplectic topology the second summand should be seen as a degenerate direction (since the 2--form $d\alpha$ vanishes on it). In the topic that we will discuss in this article, bi--invariant metrics on the contactomorphism group, we will see the influence of both the symplectic and the degenerate parts of the tangent bundle of a contact manifold. We will see that the symplectic side tends to give rise to some global rigidity, which at a first sight seems to be completely destroyed by the flexibility given by the degenerate Reeb direction. However we will explain that, at a closer look, some rigidity does survive, making it possible in particular to define non--trivial bi--invariant metrics on the group of contactomorphisms. Such bi--invariant metrics are reminiscent of the analogous metrics on the Hamiltonian group of a symplectic manifold. However we will also discuss two aspects that distinguish them, as well as other related contact rigidity results, with respect to their symplectic analogues. The first aspect is that in the contact case the rigidity phenomena that we will discuss have a \emph{discrete character} (in particular, bi--invariant metrics are \emph{integer--valued}, while their analogues in the symplectic case take values that are arbitrarily close to zero). The second specificity is that in the contact case the rigidity phenomena that we will discuss are also sensitive to the topology of the underlying manifold.

In the next section we will start the discussion by describing three famous global rigidity results in symplectic topology: Gromov's non--squeezing theorem \cite{Gromov}, the Arnold conjecture on fixed points  of Hamiltonian diffeomorphisms and the existence of bi--invariant metrics on the Hamiltonian group \cite{Hofer, Viterbo}. In the rest of the article we will then discuss what these phenomena become in the contact case, concentrating in particular on bi--invariant metrics.


\section{Symplectic rigidity and bi--invariant metrics on the Hamiltonian group}\label{section: symplectic rigidity}

Let $(W,\omega)$ be a symplectic $2n$--dimensional manifold. Since $\omega$ is non--degenerate we have that $\omega^n$ is a volume form. Thus, all transformation that preserve the symplectic form in particular also preserve the volume. In this section we will discuss three classical symplectic rigidity results that, on the other hand, distinguish symplectic transformations from volume--preserving ones.

In the standard symplectic Euclidean space $(\mathbb{R}^{2n},\omega_0)$ consider the domains
$$
B^{2n}(R) = \{\,\pi \sum_{i=1}^n x_i^2 + y_i^2 < R\,\}
$$
and $C^{2n}(R) = B^2(R) \times \mathbb{R}^{2n-2}$. \textbf{Gromov's non--squeezing theorem} \cite{Gromov} says that if $R_2 < R_1$ there is no symplectic embedding of the ball $B^{2n}(R_1)$ into the cylinder $C^{2n}(R_2)$. Note that, since $C^{2n}(R_2)$ has infinite volume, it is certainly possible to find a volume--preserving embedding of $B^{2n}(R_1)$ into $C^{2n}(R_2)$. Gromov's theorem thus tells us that such volume preserving embedding cannot be approximated by symplectic ones. This result showed for the first time that being a symplectic transformation is a much stricter and fundamentally different condition than just preserving volume, and is therefore often considered as the beginning of modern symplectic topology. 

A second result that is similar in spirit to Gromov's non--squeezing theorem (and in fact deeply related to it) is the existence of bi--invariant metrics on the Hamiltonian group of a symplectic manifold $(W,\omega)$. The Hamiltonian group is a subgroup of the group $\text{Symp}(W,\omega)$ of \textit{symplectomorphisms}, i.e. diffeomorphisms of $W$ that preserve $\omega$.  It is defined as follows. Note first that every smooth function $H: W \rightarrow \mathbb{R}$ induces a 1--parameter subgroup of $\text{Symp}(W,\omega)$, indeed the flow of the vector field $X_H$ determined by the relation
$$\iota_{X_H}\omega=-dH$$
preserves $\omega$ at all times. The vector field $X_H$ is called the \textit{Hamiltonian vector field} associated to the (autonomous) \emph{Hamiltonian function} $H$. More generally we can also consider the \textit{Hamiltonian flow} of a time-dependent function $H_t: W \rightarrow \mathbb{R}$, i.e. the flow of the time-dependent vector field $X_{H_t}$ defined by $\iota_{X_{H_t}}\omega = dH_t$. Again, the flow $\varphi_t$ of $X_{H_t}$ preserves $\omega$. An isotopy $\varphi_t$ of $(W,\omega)$ is called a \textit{Hamiltonian isotopy} if it is the Hamiltonian flow of a (time-dependent) function $H_t$. A \textbf{Hamiltonian diffeomorphism} of $(W,\omega)$ is a symplectomorphism  that can be written as the time-1 map of a Hamiltonian isotopy. We denote by $\text{Ham}(W,\omega)$ the group of Hamiltonian symplectomorphisms, and by $\widetilde{\text{Ham}}(W,\omega)$ its universal cover, i.e. the space of homotopy classes (with fixed endpoints) of Hamiltonian isotopies starting at the identity. 

Note that the Hamiltonian function of a given Hamiltonian isotopy is only well--defined up to the addition of a constant. To remove this ambiguity we normalize Hamiltonian functions, either by requiring $\int_W H_t \, \omega^n = 0$ for all $t$ (in the case when $W$ is compact) or (when we study compactly supported Hamiltonian symplectomorphisms of an open symplectic manifold) by requiring the Hamiltonian function to be compactly supported. 

In 1990 Hofer \cite{Hofer} discovered a \textbf{bi--invariant metric} on (the universal cover of) the group of Hamiltonian diffeomorphisms of a compact symplectic manifold (or the group of compactly supported Hamiltonian diffeomorphisms, in the case when the symplectic manifold is not compact). The Hofer metric is defined as follows. We first define the length of the Hamiltonian isotopy $\{\varphi_t\}_{t\in[0,1]}$ generated by a Hamiltonian function $H_t$ as
$$
l({\varphi_t}) = \int_0^1 \max_{x\in W} H_t(x) - \min_{x\in W} H_t(x)\,dt\,.
$$
We then define the norm $\lVert \varphi \rVert$ of a Hamiltonian symplectomorphism $\varphi$ to be the infimum of the lengths $l({\varphi_t})$ of all Hamiltonian isotopies $\varphi_t$ from the identity to  $\varphi$. Note that this definition mimics the usual definition of the distance between two points in a Riemannian manifold: we first define the length of a path between two points as the integral with respect to time of the norm of the tangent vector, and then define the distance between two points as the infimum of the length of all paths connecting them. In our case we work on the infinite--dimensional space $\text{Ham}(W,\omega)$ of Hamiltonian diffeomorphisms. A path between two points in $\text{Ham}(W,\omega)$ is given by a Hamiltonian isotopy between them\footnote{In fact, we are applying here a deep result due to Banyaga \cite{Banyaga}: any path $\varphi_t$ in $\text{Symp}(W,\omega)$ such that $\varphi_t$ is in $\text{Ham}(W,\omega)$ for all $t$ is a Hamiltonian isotopy.} and, as discussed above, the tangent space to $\text{Ham}(W,\omega)$ can be identified to the space of (time--dependent) functions on $W$ by the correspondence that associates to a function $H_t$ on $W$ the generated Hamiltonian isotopy. The Hofer metric can then be seen as the metric on $\text{Ham}(W,\omega)$ that is induced by the $L_{\infty}$--norm\footnote{Regarding the choice of the norm on $\mathcal{C}^{\infty}(W)$, Eliashberg and Polterovich \cite{EP - Bi-invariant metrics} showed that for any finite $p$ the pseudometric on $\text{Ham}(W,\omega)$ induced by the $L_p$--norm on $\mathcal{C}^{\infty}$(W) is always degenerate, and if $W$ is compact it vanishes identically. Moreover Buhovsky and Ostrover \cite{BO} proved that, if $W$ is compact, the bi--invariant pseudometric on $\text{Ham} (W,\omega)$ induced by any invariant pseudonorm on $\mathcal{C}^{\infty}(W)$ which is continuous with respect to the $\mathcal{C}^{\infty}$--topology either vanishes identically or is equivalent to the Hofer metric.} $\lVert H \rVert_{\infty} = \max_{x\in W}H - \min_{x \in W} H$ on $\mathcal{C}^{\infty}(W)$. Similarly we define the norm $\lVert [\{(\varphi_t\}] \rVert$ of an element of $\widetilde{\text{Ham}}(W,\omega)$ to be the infimum of the length of all isotopies representing the element.

It is quite easy to prove (see for example \cite[Section 12.3]{MS}) that the above definition gives rise to a bi--invariant pseudometric on $\text{Ham} (W,\omega)$ and $\widetilde{\text{Ham}}(W,\omega)$. On the other hand, non--degeneracy of the Hofer metric is a very deep fact which has been proved by Hofer \cite{Hofer} for $W = \mathbb{R}^{2n}$ and by Lalonde and McDuff \cite{Lalonde-McDuff} in general. Note that the fact that the Hamiltonian group is infinite--dimensional can be seen as a manifestation of the local flexibility of symplectic manifolds. This is in contrast to what happens for instance in Riemannian geometry, where the group of isometries of a compact manifold is always finite--dimensional. On the other hand, the existence of some non--trivial geometric structures on the Hamiltonian group (in particular bi--invariant metrics) can be interpreted as a manifestation of global rigidity. In fact, as explained in the work of Lalonde and McDuff, non--degeneracy of the Hofer metric is deeply related to the symplectic non--squeezing theorem. Note that the Hofer norm is never equivalent to the trivial norm, because it takes values arbitrarily close to zero. Moreover the Hofer norm is conjectured\footnote{For compact symplectic manifolds $(W,\omega)$ with $\pi_2(W) = 0$ unboundedness of the Hofer metric has been proved by Ostrover \cite{Ostrover - A comparison}. See for example Polterovich \cite{Polterovich - Geometry on the group of Hamiltonian diffeomorphisms} for more references of other known results.} to be unbounded for all symplectic manifolds (even for compact symplectic manifolds, in contrast to what happens for instance for the \emph{size--of--support norm} of volume preserving diffeomorphisms \cite{BIP}). We refer to \cite{Polterovich - book} for a monograph on the Hofer norm.

A second bi--invariant metric was discovered in 1992 by Viterbo \cite{Viterbo} on the Hamiltonian group of Euclidean space $\mathbb{R}^{2n}$ (with its standard symplectic form). The Viterbo metric metric is also related to the non--squeezing theorem, and was used in \cite{Viterbo} to give an alternative proof of non--degeneracy of the Hofer metric. The construction of Viterbo was then extended to more general symplectic manifolds by Schwarz \cite{Schwarz}  and Oh \cite{Oh}. The idea of the construction given by Viterbo is as follows. For a Hamiltonian symplectomorphism $\varphi$ of $\mathbb{R}^{2n}$ we define the \textit{symplectic action} of a fixed point $q$ by
$$
\mathcal{A}_{\varphi}(q) = \int_0^1 \big(\lambda(X_t) + H_t\big) \big(\varphi_t(q)\big) \, dt
$$
where $\varphi_t$ is a Hamiltonian isotopy joining $\varphi$ to the identity, $X_t$ is the vector field generating it and $H_t$ the corresponding Hamiltonian function. It can be shown that this value does not depend on the choice of a Hamiltonian isotopy $\varphi_t$ from the identity to $\varphi$. Note that $\mathcal{A}_{\varphi}(q)$ is the value at the loop $\varphi_t(q)$ of the \textbf{symplectic action functional } $\mathcal{A}_H$ on the space of loops of $\mathbb{R}^{2n}$, i.e. the functional which is defined by
$$
\mathcal{A}_H(\gamma) = \int_0^1 \Big( \lambda_0\big(\dot{\gamma}(t) + H_t\big(\gamma(t)\big)\big)\Big)\,dt
$$
for a loop $\gamma: [0,1] \rightarrow \mathbb{R}^{2n}$. The symplectic action functional plays a crucial role in symplectic topology, as important as the one played for example by the length or energy functional in Riemannian geometry. The definition of the symplectic action functional, which is given above only in the case of an exact symplectic manifold, can in fact be generalized to all symplectic manifolds. The study of the symplectic action functional led to the discovery of several important tools of symplectic topology, such as for instance Floer homology and generating functions. For a Hamiltonian symplectomorphism $\varphi$ of $\mathbb{R}^{2n}$ we define its \emph{action spectrum} $\Lambda(\varphi)$ to be the set of values $\mathcal{A}_{\varphi}(q)$ for all fixed points $q$ of $\varphi$. As we will review in Section \ref{section: spectral metric}, in \cite{Viterbo} Viterbo used generating functions to define \textit{spectral invariants} $c^+(\varphi)$ and $c^-(\varphi)$ for a compactly supported Hamiltonian symplectomorphism $\varphi$ of $\mathbb{R}^{2n}$, i.e. real numbers belonging to the action spectrum of $\varphi$ and satisfying certain properties that will also be reviewed in Section \ref{section: spectral metric}. Using these spectral invariants Viterbo then obtained several important applications. In particular, for a domain $\mathcal{U}$ of $\mathbb{R}^{2n}$ he defined a \emph{symplectic capacity} $c(\mathcal{U})$ by taking the supremum of $c^+(\varphi)$ for all Hamiltonian symplectomorphisms $\varphi$ supported in $\mathcal{U}$, and used it to obtain a new proof of the symplectic non--squeezing theorem. As we will see in more details in Section \ref{section: spectral metric}, the Viterbo bi--invariant metric on the group of compactly supported Hamiltonian symplectomorphism is also defined in terms of the spectral invariants $c^\pm$.

We conclude this section by mentioning a third famous global rigidity result in symplectic topology, the \textbf{Arnold conjecture} on fixed points of Hamiltonian symplectomorphisms. Let $\varphi$ be a Hamiltonian symplectomorphism of a symplectic manifold $(W,\omega)$. The graph
$$
\text{gr}(\varphi) := \{\,(q,\varphi(q)) \; | \; q \in W\,\}
$$
of $\varphi$ is a \emph{Lagrangian submanifold} of the symplectic product $\overline{W} \times W := \big(W \times W, (-\omega) \oplus \omega\big)$, i.e. a submanifold on which the symplectic form vanishes and has maximal dimension with respect to this property (thus, since $\omega$ is non--degenerate, has dimension equal to half the dimension of the ambient symplectic manifold). By the Weinstein theorem, every Lagrangian submanifold has a tubular neighborhood that is symplectomorphic to a neighborhood of the 0--section on its cotangent bundle. In particular we know thus that the diagonal $\Delta := \text{gr}(\text{id})$ of $\overline{W} \times W$ has a neighborhood which is symplectomorphic to a neighborhood of the 0--section in $T^{\ast}\Delta$. If $\varphi$ is a $\mathcal{C}^1$--small Hamiltonian symplectomorphism then its graph $\text{gr}(\varphi)$ is an exact Lagrangian section in this neighborhood and so it can be written as the graph of the differential of a function $f: \Delta \rightarrow \mathbb{R}$. Critical points of $f$ correspond to intersections of $\Delta$ with $\text{gr}(\varphi)$, hence to fixed points of $\varphi$. The conclusion is thus that if $\varphi$ is a $\mathcal{C}^1$--small Hamiltonian symplectomorphism of a compact symplectic manifold then $\varphi$ always has fixed points, at least as many as the minimal number of critical points of a function on it (note that this number is in general larger than the one predicted by the Lefchetz fixed point theorem). The Arnold conjecture states that the above existence result for fixed points should be true for all Hamiltonian symplectomorphisms of a compact symplectic manifold. By applying a new approach of infinite--dimensional Morse theory to the symplectic action functional (developing what is now known as Floer theory) Floer \cite{Floer} proved a non--degenerate version of the Arnold conjecture in the case of \emph{monotone} symplectic manifolds. Floer's method was then generalized by Fukaya and Ono \cite{FO}, Liu and Tian \cite{LT} and Hofer and Salamon \cite{HS} to prove the non--degenerate version of the Arnold conjecture for all symplectic manifolds. For certain symplectic manifolds it is also possible to prove the Arnold conjecture by doing classical (finite--dimensional) Morse theory on \emph{generating functions} (which are finite--dimensional reductions of the symplectic action functional) of Hamiltonian symplectomorphisms (see for instance Chaperon \cite{Ch1}  and Th\'{e}ret \cite{Th2}).
 
We have seen in this section three global rigidity results in symplectic topology. In the rest of the article we will first discuss how in the contact case the flexibility given by the Reeb flow seems to destroy all of them, and then see how still some global rigidity survives. We will start by discussing in the next section what happens in the contact case if we take the same formula that we used to define the Hofer metric.


\section{Hofer--like lengths for contact isotopies}\label{section: Hofer-like lengths}

Let $(M,\xi)$ be a contact manifold. As in the symplectic case, we have that any (possibly time--dependent) function on $M$ induces a flow of contactomorphisms. Indeed, let $H_t: M \rightarrow \mathbb{R}$ be a (time--dependent) function. Then $H_t$ uniquely defines a vector field $X_t$ by the formulas
\begin{equation}\label{equation: contact Hamiltonians}
\left\{
\begin{array}{l}
\iota_{X_t} d\alpha = dH_t(R_{\alpha}) \alpha - dH_t\\
\alpha(X_t) = H_t\,.
\end{array} \right.
\end{equation}
The flow $\{\phi_t\}$ of $X_t$ is a \textbf{contact isotopy}, i.e. every $\phi_t$ is a contactomorphism. Note that, in contrast to the symplectic case, all contact isotopies can be written as the flow induced by a time--dependent function on $M$ (see for example \cite[Section 2.3]{Geiges}). Note also that the contact Hamiltonian function of a contact isotopy is uniquely defined, since if we add a constant to the function the generated isotopy also changes, because of the second equation in (\ref{equation: contact Hamiltonians}). We have thus a 1--1 correspondence between contact isotopies and Hamiltonian functions (which however depends on the choice of a contact form $\alpha$ for $\xi$).

While in the symplectic case a constant Hamiltonian function generates the Hamiltonian isotopy that is constantly the identity, we see from the equations (\ref{equation: contact Hamiltonians}) that in the contact case the Hamiltonian function $H_t \equiv 1$ generates the Reeb flow. Thus if we just define the length of a contact isotopy by the same formula that we have in the symplectic case then the length of the Reeb flow is zero (confirming the idea that the Reeb flow should be seen as a degenerate direction in the contactomorphism group). Following Banyaga and Donato \cite{BD} we can correct this problem by defining the length (relative to a contact form $\alpha$) of a contact isotopy $\{\phi_t\}_{t\in[0,1]}$ by
\begin{equation}\label{equation: MS}
l_{\alpha}(\{\phi_t\}) = \int_0^1 \max_{x\in M} H_t(x) - \min_{x \in M} H_t(x) + | c_{\alpha}(H_t) |\,dt  
\end{equation}
with 
$$
c_{\alpha}(H_t) := \frac{1}{\int_M\nu_{\alpha}} \int_M H_t \, \nu_{\alpha}
$$
where $\nu_{\alpha} = \alpha \wedge (d\alpha)^n$ is the volume form on $M$ induced by $\alpha$. Note that in the symplectic case (\ref{equation: MS}) would be just the usual formula for the Hofer norm because, since we normalize the Hamiltonians, the last term is zero.

In view of the analogy with the symplectic case, the above formula is a natural candidate to be a bi--invariant metric on the contactomorphism group. However, as we will now see, something goes wrong. In order to study the properties of the candidate distance function induced by the formula (\ref{equation: MS}) we need to look at the transformation laws for contact Hamiltonians. These are given by the following lemma.

\begin{lemma}\label{lemma: contact Hamiltonians}
Let $H_t$ and $G_t$ be contact Hamiltonian functions on $M$, generating respectively the contact isotopies $\{\phi_t\}$ and $\{\psi_t\}$. Let $h_t$ and $g_t$ be the \emph{conformal factors} of $\phi_t$ and $\psi_t$, i.e. the functions on $M$ satisfying $\phi_t^{\phantom{t}\ast}\alpha = e^{h_t} \alpha$ and $\psi_t^{\phantom{t}\ast}\alpha = e^{g_t}\alpha$. Then 
\begin{enumerate}
\item The composition $\{\phi_t \circ \psi_t\}$ is generated by the Hamiltonian function $H \,\sharp\, G$ defined by
$$\
(H \, \sharp \,G)_t = H_t + (e^{h_t} \cdot G_t) \circ (\phi_t)^{-1}\,.
$$
\item The inverse $\{\phi_t^{-1}\}$ is generated by the Hamiltonian function $\overline{H}$ defined by
$$
\overline{H}_t = - e^{-h_t} \cdot (H_t \circ \phi_t)\,.
$$
\item For a contactomorphism $\theta$ with $\theta^{\ast}\alpha = e^f \alpha$, the conjugation $\{\theta^{-1} \circ \phi_t \circ \theta\}$ is generated by the Hamiltonian function $H_{\theta}$ defined by
$$
(H_{\theta})_t = e^{-f} (H_t \circ \theta)\,.
$$
\end{enumerate}
\end{lemma}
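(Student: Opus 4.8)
The plan is to reduce all three formulas to a single underlying principle: for \emph{any} contact isotopy $\{\rho_t\}$, generated by the time-dependent vector field $Y_t$ defined by $\frac{d}{dt}\rho_t = Y_t \circ \rho_t$, the corresponding contact Hamiltonian is simply $\alpha(Y_t)$. This is immediate from the $1$--$1$ correspondence described just before the lemma: since the Hamiltonian and its generating vector field determine each other, the second equation in (\ref{equation: contact Hamiltonians}), $\alpha(X_t) = H_t$, shows that the Hamiltonian of an isotopy is recovered from its generator by applying $\alpha$. One does not need to verify the first equation in (\ref{equation: contact Hamiltonians}) separately, because each $\rho_t$ is already known to be a contactomorphism, being a composition, inverse or conjugate of contactomorphisms, so $Y_t$ is automatically a contact vector field. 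With this principle in hand, each part becomes a two-step routine: compute the generating vector field of the new isotopy by the chain rule, and then apply $\alpha$, using the conformal factors to move $\alpha$ past the derivatives of $\phi_t$ and $\theta$.

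Two elementary identities will do all the work in the second step. First, from $\phi_t^{\ast}\alpha = e^{h_t}\alpha$ one gets, for any vector field $Y$, the push-forward rule $\alpha\big((\phi_t)_{\ast}Y\big) = (e^{h_t}\,\alpha(Y))\circ\phi_t^{-1}$, obtained by writing $(\phi_t^{\ast}\alpha)_q(W) = \alpha_{\phi_t(q)}(D\phi_t(q)\,W)$ with $q = \phi_t^{-1}(p)$. Second, the same computation run backwards gives $\alpha_p\big((D\phi_t(p))^{-1} X_t(\phi_t(p))\big) = e^{-h_t}(H_t\circ\phi_t)(p)$, and likewise with $\theta$, $f$ in place of $\phi_t$, $h_t$.

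For part (1) I would set $\rho_t = \phi_t\circ\psi_t$ and differentiate: $\frac{d}{dt}\rho_t(x) = X_t(\rho_t(x)) + D\phi_t\cdot\dot\psi_t(x)$, which identifies the generator as $Y_t = X_t + (\phi_t)_{\ast}Y^{\psi}_t$, where $X_t$ and $Y^{\psi}_t$ generate $\{\phi_t\}$ and $\{\psi_t\}$. Applying $\alpha$ and using the push-forward rule gives $\alpha(Y_t) = H_t + (e^{h_t}G_t)\circ\phi_t^{-1} = (H\,\sharp\,G)_t$. Part (3) is the same kind of one-line chain-rule computation for $\rho_t = \theta^{-1}\circ\phi_t\circ\theta$, whose generator is $(D\theta)^{-1}X_t(\theta(\cdot))$; the second identity then yields $e^{-f}(H_t\circ\theta)$. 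Finally, part (2) need not be computed from scratch: applying part (1) to $\psi_t = \phi_t^{-1}$, whose composition with $\phi_t$ is the identity (Hamiltonian $\equiv 0$), gives $0 = H_t + (e^{h_t}\overline{H}_t)\circ\phi_t^{-1}$, and solving for $\overline{H}_t$ produces $\overline{H}_t = -e^{-h_t}(H_t\circ\phi_t)$.

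I expect the only real difficulty to be bookkeeping rather than anything conceptual: keeping straight the points at which vector fields and functions are evaluated, the direction of each composition, and the sign and argument of the conformal factor (for instance that $\phi_t^{-1}$ has conformal factor $-h_t\circ\phi_t^{-1}$). The cleanest safeguard is to fix at the outset the convention $\frac{d}{dt}\rho_t = Y_t\circ\rho_t$ for generators and to reduce every evaluation of $\alpha$ to the two identities above, so that no computation is carried out twice.
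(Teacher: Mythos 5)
Your proof is correct. Note that the paper states Lemma \ref{lemma: contact Hamiltonians} without proof, so there is no argument of the author's to compare against; your route is the standard one the paper implicitly relies on. The key reduction --- that the Hamiltonian of \emph{any} contact isotopy is just $\alpha$ evaluated on its generating vector field, since the generator of a path of contactomorphisms is automatically a contact vector field and the correspondence (\ref{equation: contact Hamiltonians}) then forces $H_t = \alpha(X_t)$ --- is legitimate, the two conformal-factor identities are correct, the chain-rule computations of the generators in (1) and (3) check out, and deducing (2) from (1) via $\phi_t \circ \phi_t^{-1} = \mathrm{id}$ (using uniqueness of the contact Hamiltonian of an isotopy, which the paper records explicitly) is clean and avoids a separate computation.
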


Note that in the symplectic case we have the same formulas as in Lemma \ref{lemma: contact Hamiltonians}, but without the conformal factors. If we consider only \emph{strict} contact isotopies (as it is done in \cite{BD}), i.e. contact isotopies $\{\phi_t\}$ such that every $\phi_t$ preserves the contact form $\alpha$, then all conformal factors disappear from the above formulas. Using this it can be proved, as in the symplectic case, that (\ref{equation: MS}) induces a conjugation--invariant pseudonorm on the group $\text{Cont}(M,\alpha)$ of strict contactomorphisms. This conjugation--invariant pseudonorm has been studied by Banyaga and Donato \cite{BD} and M\"{u}ller and Spaeth \cite{MSpaeth}, and was proved by them to be always non--degenerate.

For general contact isotopies the presence of the conformal factors (thus, ultimately, the fact that contactomorphisms do not necessarily preserve the Reeb direction) perturb the proof of all properties needed to have a conjugation--invariant norm. Regarding for instance the triangle inequality, it is even possible to find examples of Hamiltonians $H$ and $G$ such that $l_{\alpha}(\{\phi_{t} \circ \psi_{t}\}) > l_{\alpha}(\{\phi_{t}\}) + l_{\alpha}(\{\psi_{t}\})$ (see \cite{MSpaeth}). In spite of this, as observed by Rybicki \cite{Rybicki} and Shelukhin \cite{Egor}, the formula (\ref{equation: MS}) does induce a pseudonorm on the contactomorphism group $\text{Cont}(M,\xi)$ and on the universal cover $\widetilde{\text{Cont}_0}(M,\xi)$ of its identity component, by taking the infimum of the lengths of all contact isotopies representing a given contactomorphism or a given element in the universal cover. For instance, even though the triangle inequality fails for the composition $\{\phi_t \circ \psi_t\}$ it can be proved to be true for the concatenation $\{\phi_t\} \sqcup \{\psi_t\}$, which is an isotopy in the same homotopy class as $\{\phi_t \circ \psi_t\}$. As proved by Shelukhin, this pseudonorm is always non--degenerate and so is a true norm on $\text{Cont}(M,\xi)$ and $\widetilde{\text{Cont}_0}(M,\xi)$. However, it is \emph{not} conjugation invariant since there is no way to neutralize the presence of the conformal factor in statement (3) of Lemma \ref{lemma: contact Hamiltonians}. We will actually see in the next section that in fact it is impossible on the contactomorphism group to have a conjugation--invariant norm which takes values arbitrarily close to zero.


\section{Rigidity and flexibility in contact topology}\label{section: Rigidity and flexibility in contact topology}

In Section \ref{section: symplectic rigidity} we have discussed three famous global rigidity results in symplectic topology: Gromov's non--squeezing theorem, the existence of non--trivial bi--invariant metrics on the Hamiltonian group (the Hofer and Viterbo metrics) and the Arnold conjecture for fixed points of Hamiltonian diffeomorphisms. In Section \ref{section: Hofer-like lengths} we have seen that, in the contact case, the same formula as for the Hofer metric does not give rise to a bi--invariant metric, because of the flexibility given by the fact that contactomorphisms do not necessarily preserve the Reeb flow. Note that the flexibility given by the Reeb direction also destroys the Arnold conjecture on fixed points, and the possibility to have some non--squeezing results in the contact Euclidean space. Indeed, in the standard contact Euclidean space $(\mathbb{R}^{2n+1},\xi_{0})$ it is possible to squeeze any given domain into an arbitrarily small one, for example by using the contactomorphism 
\begin{equation}\label{equation: squeezing}
(x,y,z) \mapsto (cx,cy,c^2z)
\end{equation}
for some $c \in \mathbb{R}$ small enough. Regarding the Arnold conjecture, note for example that the Reeb flow itself has no fixed points (for a small time), because it is the flow of a non--vanishing vector field. Using the squeezing map (\ref{equation: squeezing}) we will now show, following Polterovich \cite{Polterovich - private}, that not only the natural Hofer--like candidate for a conjugation--invariant norm on the contactomorphism group fails (as we have seen in Section \ref{section: Hofer-like lengths}) but in fact it is actually impossible on the contactomorphism group to have a conjugation--invariant norm which is \emph{fine}, i.e. takes values arbitrarily close to zero. The argument that we will reproduce mimics the one that is given by Burago, Ivanov and Polterovich \cite{BIP} to show that there are no fine conjugation--invariant norms on the diffeomorphism group of a smooth manifold. 

\begin{prop}\label{proposition: Polterovich}
For any contact manifold $(M,\xi)$, the identity component $\text{Cont}_0^{\phantom{0}c}(M,\xi)$ of the group of compactly supported contactomorphisms does not admit any conjugation--invariant norm which is fine.
\end{prop}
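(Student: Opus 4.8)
The plan is to follow the strategy of Burago, Ivanov and Polterovich: instead of trying to bound $\lVert \, \cdot \, \rVert$ from above, I would show that any conjugation--invariant norm $\lVert \, \cdot \, \rVert$ on $\text{Cont}_0^{\phantom{0}c}(M,\xi)$ is bounded \emph{below} by a fixed positive constant on all nontrivial elements, which is exactly the negation of fineness. The only contact--specific input is the squeezing map (\ref{equation: squeezing}), which lets one move and, crucially, \emph{rescale} balls by compactly supported contactomorphisms. I would begin by fixing reference data inside a Darboux chart modelled on $(\mathbb{R}^{2n+1},\xi_0)$: a ball $B_0$, a nontrivial $f_0$ supported in $B_0$, a second ball $B_0'$ disjoint from $B_0$, a contactomorphism $\psi_0$ with $\psi_0(B_0)=B_0'$, and then $f_0':=\psi_0 f_0\psi_0^{-1}$ and $w_0:=f_0^{-1}f_0'$. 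The element $w_0$ is a fixed nontrivial contactomorphism supported in $B_0\sqcup B_0'$, so by non--degeneracy $\lVert w_0\rVert>0$.

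Given an arbitrary $g\neq\text{id}$, since $g$ moves some point it displaces a small ball $B$, i.e. $g(B)\cap B=\emptyset$. Using the squeezing map (\ref{equation: squeezing}) together with a translation I would produce a compactly supported contactomorphism $\phi$ with $\phi(B_0)=B$, and set $f:=\phi f_0\phi^{-1}$, a genuine conjugate of $f_0$ supported in $B$, so that $\lVert f\rVert=\lVert f_0\rVert$ by conjugation invariance. This is precisely the step that has no symplectic analogue: no symplectomorphism can carry the fixed ball $B_0$ onto an arbitrarily small $B$ (the volume, or the symplectic capacity, would obstruct it), which is exactly why the Hofer metric can be fine whereas here fineness will fail. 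The commutator $[g,f]=gfg^{-1}f^{-1}$ then satisfies $\lVert[g,f]\rVert\le\lVert g\rVert+\lVert g^{-1}\rVert=2\lVert g\rVert$, using only the triangle inequality and conjugation invariance (together with $\lVert g^{-1}\rVert=\lVert g\rVert$, which follows from bi--invariance and symmetry of the associated metric).

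The heart of the argument is to identify $[g,f]$, up to conjugacy, with the fixed element $w_0$. Since $f$ is supported in $B$ and $gfg^{-1}$ in the disjoint ball $g(B)$, the commutator factors as $[g,f]=(gfg^{-1})\cdot f^{-1}$, with the two commuting factors supported in disjoint balls and equal there to a conjugate of $f_0$ and to $f^{-1}$. I would then build a compactly supported contactomorphism $\Psi$ in the identity component sending the ordered pair $(B,g(B))$ to $(B_0,B_0')$ and conjugating these two pieces respectively to $f_0^{-1}$ and $f_0'$, so that $\Psi[g,f]\Psi^{-1}=f_0^{-1}f_0'=w_0$. Conjugation invariance then gives $\lVert w_0\rVert=\lVert[g,f]\rVert\le 2\lVert g\rVert$, hence $\lVert g\rVert\ge\tfrac12\lVert w_0\rVert$ for every $g\neq\text{id}$. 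Since $\tfrac12\lVert w_0\rVert$ is a fixed positive number independent of $g$, the norm cannot take values arbitrarily close to zero.

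The main obstacle I expect is the geometric realization underlying both $\phi$ and $\Psi$: one must produce a single globally defined, compactly supported contactomorphism in the identity component that takes prescribed values (prescribed conjugates of $f_0$) on a disjoint pair of balls, with the balls themselves allowed to be of any size or location. This is where the flexibility of contact topology carries the argument — transitivity of $\text{Cont}_0^{\phantom{0}c}$ on (configurations of disjoint) Darboux balls and the isotopy extension theorem, powered by the rescaling (\ref{equation: squeezing}) to absorb the discrepancy in size between $B_0$ and $B$. Once these soft realizations are granted, the algebraic part (the commutator estimate and the conjugacy bound) is immediate, and it is the impossibility of the rescaling step in the volume--preserving symplectic setting that explains why the same scheme does not contradict fineness of the Hofer metric.
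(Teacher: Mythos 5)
Your proposal is sound and lands exactly where the paper does --- a uniform bound $\lVert g \rVert \geq \tfrac{1}{2}\lVert w_0 \rVert > 0$ for every $g \neq \text{id}$, which is precisely the negation of fineness --- and it uses the same two contact--specific ingredients: a small ball $B$ displaced by the given nontrivial element, and the squeezing map (\ref{equation: squeezing}) to transport a fixed ball into it. But the mechanism is genuinely different, and the difference is where the real work sits. The paper never matches a commutator with a fixed group element up to conjugacy. It fixes a \emph{non--commuting} pair $\psi_1,\psi_2$ supported in a Darboux ball $B_0$, produces $\theta \in \text{Cont}_0^{\phantom{0}c}(M,\xi)$ satisfying only an \emph{inclusion} $\theta(B_0) \subset B$, and then invokes the purely algebraic displacement lemma of Eliashberg--Polterovich and Burago--Ivanov--Polterovich: if $a_1,a_2$ are supported in $\mathcal{U}$ and $b$ displaces $\mathcal{U}$, then $b a_2 b^{-1}$ commutes with $a_1$, which gives the identity $[a_1,a_2] = \big[[a_1,b],a_2\big]$ and hence $\lVert [a_1,a_2]\rVert \leq 2\lVert [a_1,b]\rVert \leq 4\lVert b\rVert$; applied with $a_i = \theta\psi_i\theta^{-1}$, $\mathcal{U} = B$, $b = g$, this yields $0 < \lVert [\psi_1,\psi_2]\rVert \leq 4\lVert g\rVert$. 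Your route instead needs the \emph{exact} conjugacy $\Psi[g,f]\Psi^{-1} = w_0$, i.e.\ a single $\Psi \in \text{Cont}_0^{\phantom{0}c}(M,\xi)$ carrying the ordered pair $(B,g(B))$ onto $(B_0,B_0')$ with prescribed germs (it suffices that $\Psi = \phi^{-1}$ near $\text{supp}\,f$ and $\Psi = \psi_0\phi^{-1}g^{-1}$ near $\text{supp}\,(gfg^{-1})$). This two--ball matching statement is true and provable by soft means --- shrink both balls to tiny ones using (\ref{equation: squeezing}), transport the tiny balls along disjoint paths (possible since $\dim M \geq 3$), un--shrink onto the targets, and extend to an ambient compactly supported contactomorphism via the contact isotopy extension theorem with cut--off Hamiltonians --- but it is a real lemma, distinctly stronger than the inclusion $\theta(B_0)\subset B$, and in your write--up it is exactly the step left as a black box: it is what you would actually have to prove. (Your other realization, $\phi(B_0)=B$, is harmless: just define $B$ to be the image of $B_0$ under a squeeze--and--translate placed in a Darboux chart around a non--fixed point of $g$.) What your version buys is a slightly better constant ($\tfrac{1}{2}$ versus $\tfrac{1}{4}$), a single nontrivial supported element $f_0$ instead of a non--commuting pair, and no double--commutator trick; what the paper's version buys is that all the geometry collapses to one line of transitivity--plus--squeezing, everything else being group theory valid in any group with a conjugation--invariant norm. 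If you want the shortest complete write--up, graft the displacement lemma onto your setup: then $w_0$, $\psi_0$ and $\Psi$ disappear entirely.
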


\begin{proof}
Let $B$ be a Darboux ball in $M$ and let $\psi_1$ and $\psi_2$ be two contactomorphisms that are supported in $B$ and do not commute, i.e. $[\psi_1,\psi_2] := \psi_1\psi_2\psi_1^{-1}\psi_2^{-1} \neq \text{id}$. Suppose by contradiction that $\lVert \, \cdot\,\rVert$ is a fine conjugation--invariant norm on $\text{Cont}_0^{\phantom{0}c}(M,\xi)$, and let $\phi$ be a contactomorphism with $\lVert \phi \rVert = \epsilon$ for $\epsilon$ arbitrarily small. Since $\phi$ is not the identity there is a small ball $B_{\phi}$ in $M$ that is displaced by $\phi$. Using the fact that the contactomorphism group is transitive and that in $(\mathbb{R}^{2n+1},\xi_{0})$ (and thus also on any Darboux ball of $M$) it is possible to squeeze any given domain into an arbitrarily small one, we can find a compactly supported contactomorphism $\theta$ of $M$ such that $\theta(B) \subset B_{\phi}$. Then $\theta\psi_1\theta^{-1}$ and $\theta\psi_2\theta^{-1}$ are supported in $B_{\phi}$. Since we assume that $\lVert \, \cdot\,\rVert$ is conjugation--invariant and $[\psi_1,\psi_2] \neq \text{id}$ we have
$$
\lVert [\theta \psi_1 \theta^{-1}, \theta \psi_2 \theta^{-1}] \rVert = \lVert \theta [\psi_1,\psi_2] \theta^{-1} \rVert  = \lVert [\psi_1,\psi_2] \rVert \neq 0\,.
$$
On the other hand we claim that
$$
\lVert [\theta \psi_1 \theta^{-1}, \theta \psi_2 \theta^{-1}] \rVert \leq 4 \lVert \phi \rVert = 4 \epsilon\,,
$$
which gives a contradiction since $\epsilon$ was chosen arbitrarily small. The above inequality is true because of the following general fact from \cite{EP - Bi-invariant metrics} and \cite{BIP}. Suppose that $a_1$ and $a_2$ are supported in a domain $\mathcal{U}$ and that $b$ displaces $\mathcal{U}$. Then for every conjugation--invariant norm $\lVert \, \cdot\,\rVert$ we have that $\lVert [a_1,a_2]\rVert \leq 4 \lVert b\rVert$. Indeed, $ba_2b^{-1}$ commutes with $a_1$ and so we have
$$
\lVert [a_1,a_2] \rVert = \lVert [a_1ba_1^{-1}b^{-1},a_2]\rVert \leq 2 \lVert [a_1,b]\rVert \leq 4 \lVert b\rVert\,.
$$
\end{proof}

As we have seen, the key in the above argument is the fact that in $(\mathbb{R}^{2n+1},\xi_{0})$ we can squeeze any given domain into an arbitrarily small one. The complete flexibility in $\mathbb{R}^{2n+1}$ should be compared on the other hand with the following surprising non--squeezing result, which was discovered in 2006 by Eliashberg, Kim and Polterovich \cite{EKP}.

\begin{thm}[\cite{EKP}]\label{theorem: EKP}
Consider the manifold $\mathbb{R}^{2n} \times S^1$ with the contact structure $\xi_0 = \text{ker} (dz + \sum_{i=1}^n x_idy_i - y_i dx_i)$. If $R_2 \leq k < R_1$ for some integer $k$ then there is no contact isotopy that squeezes $B(R_1) \times S^1$ into $B(R_2) \times S^1$.
\end{thm}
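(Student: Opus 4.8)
The plan is to show that a squeezing would violate the monotonicity of an integer--valued contact capacity, one that reads off the integer part of $R$ from $B(R)\times S^1$. First I would reformulate the statement: the time--1 map $\phi$ of a squeezing isotopy is a contactomorphism in the identity component that sends $\overline{B(R_1)}\times S^1$ into $B(R_2)\times S^1$, and by cutting off the generating Hamiltonian outside a large ball we may take $\phi$ compactly supported. Passing to the universal cover $\mathbb{R}^{2n}\times\mathbb{R}$ of $\mathbb{R}^{2n}\times S^1$, such a $\phi$ lifts to a $\mathbb{Z}$--equivariant contactomorphism $\widetilde\phi$, where $\mathbb{Z}$ acts by the deck transformations $(w,z)\mapsto(w,z+k)$ generated by the Reeb ($z$--translation) flow. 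The deck index $k$ is the integer that will appear in the conclusion.

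The invariant I would attach to a domain $U\subseteq\mathbb{R}^{2n}\times S^1$ uses generating functions. Viewing $\mathbb{R}^{2n}\times S^1$ as the contactification of $(\mathbb{R}^{2n},\omega_0)$ --- the $S^1$--analogue of the $1$--jet space --- every compactly supported contactomorphism in the identity component admits a generating function quadratic at infinity, unique up to fibre--preserving diffeomorphism and stabilization (Chaperon, Th\'eret, Bhupal). From the relative cohomology of the sublevel sets of such a generating function I would extract Viterbo--type spectral invariants; the essential new feature relative to the symplectic $\mathbb{R}^{2n}$ case is that, because the fibre directions now wrap the $S^1$, these invariants come in a family $c_k$ graded by the number $k$ of Reeb periods, i.e. by the deck index of the lift. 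I would then define the capacity $c(U)$ as the supremum over contactomorphisms supported in $U$ of the largest $k$ for which the corresponding spectral number is nonzero, and verify that $c$ is monotone: if $U_1$ contact--squeezes into $U_2$ then $c(U_1)\le c(U_2)$, which follows from functoriality of spectral invariants under the inclusion of generating functions induced by the squeezing.

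The heart of the argument is the computation $c(B(R)\times S^1)=\lceil R\rceil$ (the exact normalization depending on conventions), and this is where the integer enters geometrically. Consider the rotation loop $w\mapsto e^{2\pi i t}w$ of $\mathbb{R}^{2n}=\mathbb{C}^n$, generated by the autonomous Hamiltonian $\pi|w|^2$; lifting it to a strict contact isotopy of $\mathbb{R}^{2n}\times\mathbb{R}$ shifts the $z$--coordinate by the symplectic action, which on the sphere $\{\pi|w|^2=R\}$ equals exactly $R$. Thus, after one full turn, the lifted loop is the Reeb translation by $R$, and this descends to the identity on $\mathbb{R}^{2n}\times S^1$ precisely when $R\in\mathbb{Z}$. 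This is the mechanism that forces the spectral family $c_k$ to be supported in the range determined by $\lceil R\rceil$: the capacity cannot detect fractional improvements, only passages of $R$ across integers. Given the hypothesis $R_2\le k<R_1$, this computation yields $c(B(R_2)\times S^1)\le k<c(B(R_1)\times S^1)$, contradicting monotonicity and completing the proof.

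The main obstacle lies in two places. First, one must develop the generating--function spectral theory on the $S^1$--contactification and control the non--compact, $\mathbb{Z}$--equivariant lift so that the graded invariants $c_k$ are well defined and functorial. Second, one must carry out the sharp computation on balls, which requires a Lusternik--Schnirelmann / persistence argument showing that the spectral numbers jump exactly at integer radii and not before. An alternative and historically original route, due to Eliashberg, Kim and Polterovich, replaces generating functions by a Floer--theoretic (contact/symplectic homology) invariant of the prequantizations of balls; the logical skeleton --- monotonicity under squeezing together with a computation on balls whose answer is governed by the integer $k$ --- is identical, and the genuine difficulty remains the computation of the invariant on $B(R)\times S^1$.
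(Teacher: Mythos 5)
Your skeleton --- an integer--valued contact capacity built from generating--function spectral invariants, monotonicity under squeezing, and the computation $c\big(B(R)\times S^1\big)=\lceil R\rceil$ --- is exactly the route the paper takes in Section \ref{section: spectral metric}, following \cite{mio 1, mio 2}, and you correctly identify the original SFT proof of \cite{EKP} as an alternative. But there is a genuine gap at the one step you dispatch in half a sentence: you claim invariance of the capacity under the squeezing isotopy ``follows from functoriality of spectral invariants under the inclusion of generating functions induced by the squeezing.'' No such functoriality exists. A contactomorphism $\psi$ with $\psi(U_1)\subset U_2$ does not induce any map of generating functions; it relates contactomorphisms $\phi$ supported in $U_1$ to contactomorphisms supported in $U_2$ by conjugation, $\phi\mapsto\psi\phi\psi^{-1}$, so what you actually need is conjugation--invariance of your spectral data. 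In the contact setting the spectral numbers are \emph{not} conjugation--invariant: critical values of the generating function record time--shifts of translated points, and translated points are destroyed by conjugation because $\psi$ need not preserve the Reeb flow. This failure is not a technicality: if your invariants were insensitive to conjugation for soft ``functoriality'' reasons, the identical argument would run on $\mathbb{R}^{2n+1}$, where Bhupal's generating--function spectral invariants exist just as well, and would contradict the total squeezability of $(\mathbb{R}^{2n+1},\xi_0)$ given by the map (\ref{equation: squeezing}) (cf. Proposition \ref{proposition: Polterovich}).

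What rescues the argument on $\mathbb{R}^{2n}\times S^1$, and what is missing from your proposal, is Proposition \ref{proposition: properties of spectral invariants for contactomorphisms}(3): the integer roundings $\lceil c^{\pm}\rceil$ and $\lfloor c^{\pm}\rfloor$ \emph{are} conjugation--invariant. Its proof is a bifurcation argument: along $\psi_t\phi\psi_t^{-1}$ a continuous path of critical values of the generating functions can never cross an integer $k$, because a critical value equal to $k$ corresponds to a translated point with integer time--shift, which for a $1$--periodic contactomorphism is a discriminant point (a genuine fixed point $q$ with $\phi^{\ast}\alpha_q=\alpha_q$), and discriminant points --- unlike general translated points --- are preserved by conjugation. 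This is where the $1$--periodicity of the Reeb flow and the integer in the statement actually enter the proof. Your rotation--loop computation is a reasonable heuristic for the ball computation (which in the paper follows from $c(\mathcal{U}\times S^1)=\lceil c(\mathcal{U})\rceil$ together with Viterbo's $c\big(B^{2n}(R)\big)=R$), but it cannot substitute for the conjugation--invariance of the rounded invariants, which is what makes $c$ a contact invariant at all. With that ingredient supplied, the rest of your outline gives the contradiction $\lceil R_1\rceil\le\lceil R_2\rceil\le k<k+1\le\lceil R_1\rceil$ exactly as in the paper.
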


Theorem \ref{theorem: EKP} was originally proved in \cite{EKP} using methods from symplectic field theory. It was later reproved in \cite{mio 1} using generating functions and, with a similar idea, by Albers and Merry \cite{AM2} using Rabinowitz Floer homology. In \cite{EKP} it was also proved that, on the other hand, if we start with $R_1 < 1$ then it is possible to squeeze $B(R_1) \times S^1$ into $B(R_2) \times S^1$ for $R_2$ arbitrarily small. A recent result of Chiu \cite{Chiu} (and a work in progress by Fraser) treats the remaining cases, proving that in fact for any $R_2 < R_1$ with $R_2 > 1$ it is not possible to squeeze $B(R_1) \times S^1$ into $B(R_2) \times S^1$. As we will briefly discuss in Section \ref{section: orderability and the oscillation metric}, the proof of the squeezing result in \cite{EKP} for $R_1 < 1$ is related to the notion of orderability, and in particular to the existence of a positive contractible loop of contactomorphisms in $S^{2n-1}$. On the other hand, the non--squeezing theorem for integers is related to the fact that the Reeb flow associated to the standard contact structure of $\mathbb{R}^{2n} \times S^1$ is 1--periodic. As we will explain in Section \ref{section: spectral metric} (following \cite{mio 1}) the 1--periodicity gives us some control on the Reeb direction, making it possible to prove Theorem \ref{theorem: EKP}.

As we have seen in the proof of Proposition \ref{proposition: Polterovich}, the fact that the contactomorphism group does not admit any fine conjugation--invariant norm is due to the possibility to squeeze any given domain of $\mathbb{R}^{2n+1}$ into an arbitrarily small one. On the other hand, Theorem \ref{theorem: EKP} suggests that, at least for certain contact manifolds such as for example $\mathbb{R}^{2n} \times S^1$, it might be possible to have \emph{discrete} conjugation--invariant norms on the contactomorphism group. In the next section we will see that this is indeed the case. Following \cite{mio 1} we will show how to construct an integer--valued bi--invariant metric on the group of compactly supported contactomorphisms of $\mathbb{R}^{2n} \times S^1$. This metric is a natural contact analogue of the Viterbo metric on the group of compactly supported Hamiltonian symplectomorphisms of $\mathbb{R}^{2n}$. As we will see in the next section, while the Viterbo metric is defined in terms of the symplectic action of fixed points of Hamiltonian symplectomorphisms, the metric on $\mathbb{R}^{2n} \times S^1$ is related to the \emph{contact action} (or \emph{time--shift}) of \emph{translated points} of contactomorphisms.

Roughly speaking, translated points can be thought of as fixed points modulo the Reeb flow. The precise definition is as follows. Let $\big(M,\xi = \text{ker}(\alpha)\big)$ be a contact manifold and $\phi$ a contactomorphism. A point $q$ of $M$ is called a \textbf{translated point} of $\phi$, with respect to the contact form $\alpha$, if $\phi(q)$ and $q$ are in the same Reeb orbit and $\phi^{\ast}\alpha_q = \alpha_q$. As mentioned above, and as we will see in the next section, translated points of contactomorphisms play the same role in the definition of the bi--invariant metric on the contactomorphism group of $\mathbb{R}^{2n} \times S^1$ as the one that is played in the symplectic case by fixed points of Hamiltonian symplectomorphisms. A crucial difference between translated points and fixed points of Hamiltonian diffeomorphisms, that explains the different kind of results that we have in the contact case, is that translated points are flexible, in particular in the sense that they are not invariant by conjugation. In the symplectic case, fixed points of Hamiltonian symplectomorphisms and their symplectic action are invariant by conjugation. Indeed if $q$ is a fixed point of a Hamiltonian symplectomorphism $\varphi$ then for any other $\psi$ we have that $\psi(q)$ is a fixed point of the conjugation $\psi\varphi\psi^{-1}$. Moreover the symplectic action of $q$ as a fixed point of $\varphi$ coincides to the symplectic action of $\psi(q)$ as a fixed point of $\psi \varphi \psi^{-1}$ (see for example \cite[5.2]{HZ}). In the contact case, if $q$ is a translated points of a contactomorphism $\phi$ then there is no reason in general why $\psi(q)$ should be a translated point of $\psi\phi\psi^{-1}$. Indeed in general $\psi$ does not preserve the Reeb flow and so the fact that $q$ and $\phi(q)$ lie on the same Reeb orbit does not necessarily imply that the same should be true also for the images $\psi(q)$ and $\psi\phi\psi^{-1}\big(\psi(q)\big) = \psi\big(\phi(q)\big)$. Note however that translated points that are also fixed points (in other words, fixed points $q$ of $\phi$ with $\phi^{\ast}\alpha_q = \alpha_q$) are invariant by conjugation (indeed, an easy calculation shows that if $\phi^{\ast}\alpha_q = \alpha_q$ then $(\psi\phi\psi^{-1})^{\ast} \alpha_{\psi(q)} = \alpha_{\psi(q)}$). Translated points that are also fixed points are called \textbf{discriminant points}. As we will see, the fact that translated points are flexible while discriminant points are rigid makes that the contact analogue of the Viterbo metric does not work on $\mathbb{R}^{2n+1}$ but only on $\mathbb{R}^{2n} \times S^1$, and there it becomes integer--valued. Indeed, as we will see, in order to be bi--invariant this metric is defined in terms of discriminant points, that on $\mathbb{R}^{2n} \times S^1$ correspond to translated points with \emph{integer} time--shift. Note that a very similar phenomenon appears also in the work of Givental on the non--linear Maslov index \cite{Givental - Nonlinear Maslov index}, where the rigidity given by the discriminant points of contactomorphisms is used to define a quasimorphism on the universal cover of the contactomorphism group of real projective space $\mathbb{R}P^{2n-1}$ (with the contact structure induced by the standard one on the sphere $S^{2n-1}$). 

In Section \ref{section: discriminant metric} we will see how to use the rigidity given by discriminant points to define a bi--invariant metric (the \emph{discriminant metric}) on the universal cover of the contactomorphism group of any contact manifold. In Section \ref{section: orderability and the oscillation metric} the discriminant metric will then be combined with the notion of orderability to obtain what we call the \emph{oscillation metric}.

To conclude this section, note that translated points not only, as we will see, play the same role as fixed points of Hamiltonian diffeomorphisms in the definition of the spectral metric on $\mathbb{R}^{2n} \times S^1$ but, as first observed in \cite{mio 4, mio 5}, they also seem to satisfy an analogue of the Arnold conjecture on fixed points of Hamiltonian diffeomorphisms. Indeed, let $\phi$ be a contactomorphism of $\big(M,\xi = \text{ker}(\alpha)\big)$ with $\phi^{\ast}\alpha = e^g \alpha$ and consider its \emph{contact graph}
$$
\text{gr}(\phi) = \{\,\big(q,\phi(q),g(q)\big)\;,\; q \in M\,\}
$$
in the \emph{contact product} $\big(M \times M \times \mathbb{R}, \text{ker}(e^{\theta} \alpha_1 - \alpha_2)\big)$, where $\theta$ is the coordinate in $\mathbb{R}$ and $\alpha_1$ and $\alpha_2$ the pullback of $\alpha$ by the projection on the first and second factor. We have that $\text{gr}(\phi)$ is a \emph{Legendrian submanifold} of $M \times M \times \mathbb{R}$, i.e. an integral submanifold of maximal dimension of the contact distribution. While in the symplectic case fixed points of a Hamiltonian symplectomorphism correspond to intersections between its graph and the diagonal, now we have that translated points of a contactomorphism $\phi$ are in 1--1 correspondence with Reeb chords between the graph $\text{gr}(\phi)$ and the diagonal $\Delta:= \text{gr}(\text{id})$. By the contact version of the Weinstein theorem we know that the diagonal $\Delta$ in $M \times M \times \mathbb{R}$ has a neighborhood which is strictly contactomorphic to a neighborhood of the 0--section in $J^1\Delta$. If we assume that the contactomorphism $\phi$ is $\mathcal{C}^1$--small then its graph $\text{gr}(\phi)$ is contained in this tubular neighborhood and in fact corresponds to a Legendrian section of $J^1\Delta$, and hence to the 1--jet 
$$
j^1f = \{\,\big(x,df(x),f(x)\big) \; , \; x \in \Delta\,\}
$$
of a function $f$ on $\Delta$. Critical points of $f$ correspond to Reeb chords between $j^1f$ and the 0--section, hence to Reeb chords between $\text{gr}(\phi)$ and $\Delta$, hence to translated points of $\phi$. Thus, every $\mathcal{C}^1$--small contactomorphism of a compact contact manifold $\big(M,\xi = \text{ker}(\alpha)\big)$ always has translated points, at least as many as the minimal number of critical points of a smooth function on $M$. In analogy with the Arnold conjecture in the symplectic case, it seems natural to conjecture that the same should be true for all contactomorphisms that are contact isotopic to the identity. Since it was proposed in \cite{mio 4, mio 5}, this problem has been studied and proved in some special cases by several authors \cite{mio 5, AM1, AM3, Egor, Meiwes - Naef, Zenaidi}. In a work in progress \cite{Floer homology for translated points} I am constructing a Floer homology theory for translated points in order to obtain a good framework to study this conjecture for more general contact manifolds. In view of the geometric relevance of translated and discriminant points I then expect that this homology theory will be also a good tool to study the contact rigidity phenomena that are discussed in this article, and so in particular bi--invariant metrics on the contactomorphism group.


\section{The spectral metric on $\mathbb{R}^{2n} \times S^1$}\label{section: spectral metric}

In this section we will review how the Viterbo metric on the group of compactly supported Hamiltonian symplectomorphisms of $\mathbb{R}^{2n}$ is defined and then show, following \cite{mio 1, mio 2}, how the construction can be generalized to obtain an integer valued bi--invariant metric on the identity component of the contactomorphism group of $\mathbb{R}^{2n} \times S^1$. 

As already mentioned above, the Viterbo metric is defined using the method of \emph{generating functions}. Generating functions are smooth real functions, defined on some finite dimensional manifolds,  that are associated to certain Lagrangian submanifolds of cotangent bundles. The most basic example of a generating function is given by the fact that if $f: B \rightarrow \mathbb{R}$ is a smooth function on a smooth manifold $B$ then the image $L_f$ of the differential $df: B \rightarrow T^{\ast}B$ is a Lagrangian submanifold. In this case we say that $f$ is a generating function for $L_f$. Note that critical points of $f$ correspond to intersections of $L_f$ with the 0--section, and in fact the whole geometry of the Lagrangian submanifold $L_f$ is determined by the Morse theory of $f$. In order to associate a function to more general Lagrangian submanifolds of $T^{\ast}B$ we can extend the above idea as follows. Instead of considering just functions defined on $B$ we consider, more generally, functions defined on the total space of a fiber bundle over $B$. Then, instead of taking the graph of the differential we take the graph of the horizontal derivative along fiber critical points. This construction goes back to H\"{o}rmander \cite{Hor}, and in more details can be described as follows. Let $F: E \rightarrow \mathbb{R}$ be a function defined on the total space of a fiber bundle $p: E \rightarrow B$. Assume that $dF: E \rightarrow T^{\ast}E$ is transverse to the fiber normal bundle 
$$
N_E := \{\,(e,\mu)\in T^{\ast}E\,|\,\mu=0 \text{ on } \text{ker}\,dp(e)\,\}
$$
so that the set $\Sigma_F := dF^{-1}(N_E)$ of fiber critical points is a submanifold of $E$, of dimension equal to the dimension of $B$. Given a point $e$ of $\Sigma_F$ we associate to it an element $v^{\ast}(e)$ of $T_{p(e)}^{\phantom{e}\ast}B$ by defining
$$
v^{\ast}(e) = dF(\widehat{X})
$$
for $X\in T_{p(e)}B$, where $\widehat{X}$ is any vector in $T_eE$ with $p_{\ast}(\widehat{X}) = X$. The map 
$$i_F: \Sigma_F \rightarrow T^{\ast}B$$
defined by $e \mapsto \big(p(e),v^{\ast}(e)\big)$ is a Lagrangian immersion, and the function $F: E \rightarrow \mathbb{R}$ is said to be a generating function for the (possibly immersed) Lagrangian submanifold $L_F = i_F(\Sigma_F)$ of $T^{\ast}B$.  Note that if $E=B$ and $p$ is the identity then $i_F: \Sigma_F \rightarrow T^{\ast}B$ is just the graph of the differential of $F$. Note also that, as in the special case of the differential of a function, critical points of a generating function $F$ are in 1--1 correspondence with intersections of the Lagrangian submanifold $L_F$ with the 0--section. By the work of Chaperon, Laudenbach and  Sikorav \cite{Ch1, LS, Sikorav - sur les immersions lagrangiennes, Sikorav - problemes d'intersections} we know that, if $B$ is closed, any Lagrangian submanifold $L$ of $T^{\ast}B$ that is Hamiltonian isotopic to the 0--section has a generating function $F_L: B \times \mathbb{R}^N \rightarrow \mathbb{R}$ quadratic at infinity. Moreover, if $\varphi_t$ is a Hamiltonian isotopy starting at the identity then there is a smooth 1--parameter family $F_t$ of generating functions quadratic at infinity for the Lagrangian isotopy given by the image of the 0--section by $\varphi_t$. As proved by Viterbo and Th\'{e}ret \cite{Viterbo, Th3}, generating functions quadratic at infinity of Lagrangian submanifolds of $T^{\ast}B$ which are Hamiltonian isotopic to the 0--section are essentially unique.

In \cite{Viterbo} Viterbo applied the construction of generating functions to the case of compactly supported Hamiltonian symplectomorphisms of $\mathbb{R}^{2n}$ as follows. Let $\varphi$ be a Hamiltonian symplectomorphism of $\mathbb{R}^{2n}$. We identify the product $\overline{\mathbb{R}^{2n}}\times\mathbb{R}^{2n}$ with the cotangent bundle $T^{\ast}\mathbb{R}^{2n}$ by the symplectomorphism
$$
\tau(x,y, X, Y) = (\frac{x+X}{2},\frac{y+Y}{2}, Y-y,x-X)
$$
and consider the Lagrangian submanifold $\Gamma_{\varphi}$  of $T^{\ast}\mathbb{R}^{2n}$ that corresponds to the graph of $\varphi$. Since we assume that $\varphi$ is compactly supported, and since $\tau$ sends the diagonal to the 0--section, we have that $\Gamma_{\varphi}$ coincides with the 0--section outside a compact set. So $\Gamma_{\varphi}$ can be seen as a Lagrangian submanifold of $T^{\ast}S^{2n}$, by seeing $S^{2n}$ as the 1--point compactification of $\mathbb{R}^{2n}$. Note also that $\Gamma_{\varphi}$ is Hamiltonian isotopic to the 0--section, because $\varphi$ is Hamiltonian isotopic to the identity. By the existence results for generating function  of Lagrangians we obtain thus a generating function quadratic at infinity $F_{\varphi}: S^{2n} \times \mathbb{R}^N \rightarrow \mathbb{R}$ for $\Gamma_{\varphi}$. Note that critical points of $F_{\varphi}$ correspond to intersections of $\Gamma_{\varphi}$ with the 0--section and hence to fixed points of $\varphi$. Moreover it can be shown that the critical values of the generating function $F_{\varphi}$ coincide with the symplectic action of the corresponding fixed points of $\varphi$.

By applying a classical minimax method to the generating function $F_{\varphi}$ of a compactly supported Hamiltonian symplectomorphism $\varphi$ of $\mathbb{R}^{2n}$, Viterbo \cite{Viterbo} used the unit and top cohomology classes in $S^{2n}$ to select critical values of $F_{\varphi}$ and define \emph{spectral invariants} $c^{\pm}(\varphi)$. As proved in \cite{Viterbo}, these numbers satisfy the following properties.

\begin{prop}\label{proposition: properties of spectral invariants for symplectomorphisms}
\begin{enumerate}
\item The maps $\text{Ham}^c(\mathbb{R}^{2n}) \rightarrow \mathbb{R}$, $\varphi \mapsto c^{\pm}(\varphi)$ are continuous.
\item $c^+(\varphi) \geq 0$ and $c^-(\varphi)\leq 0$.
\item $c^+(\varphi) = c^-(\varphi) = 0$ if and only if $\varphi$ is the identity.
\item $c^-(\varphi) = - \, c^+(\varphi^{-1})$.
\item $c^+(\varphi\psi) \leq c^+(\varphi) + c^+(\psi)$.
\item $c^{\pm}(\psi\varphi\psi^{-1}) = c^{\pm}(\varphi)$.
\end{enumerate}
\end{prop}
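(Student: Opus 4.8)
The plan is to work throughout with the generating function quadratic at infinity $F_\varphi: S^{2n} \times \mathbb{R}^N \to \mathbb{R}$ of $\Gamma_\varphi$, and to define $c^+(\varphi)$ and $c^-(\varphi)$ as the minimax critical values of $F_\varphi$ associated respectively to the fundamental class $\mu \in H^{2n}(S^{2n})$ and to the unit class $1 \in H^0(S^{2n})$. Writing $F^a = \{F_\varphi \le a\}$ and $F^{-\infty}$ for a sublevel set lying below all critical values, the Thom isomorphism identifies $H^{\ast}(S^{2n})$ with $H^{\ast+\mathrm{ind}}(F^{+\infty}, F^{-\infty})$, and for a class $u$ one sets $c(u, F_\varphi) = \inf\{a : \text{the image of } T(u) \text{ under the restriction } H^{\ast}(F^{+\infty}, F^{-\infty}) \to H^{\ast}(F^a, F^{-\infty}) \text{ is nonzero}\}$. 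The first point I would establish is that this number is independent of the choice of $F_\varphi$: this is precisely where the uniqueness of generating functions quadratic at infinity (the Viterbo--Th\'{e}ret theorem cited above) enters, since fiber-preserving diffeomorphisms and stabilizations leave the minimax unchanged.

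Continuity (1) would follow from the Lipschitz estimate $|c(u, F) - c(u, G)| \le \sup|F - G|$ for generating functions over the same base, which is immediate from the monotonicity $F \le G \Rightarrow F^a \subseteq G^a$; combining this with the fact that a Hamiltonian isotopy produces a continuous family of generating functions gives continuity of $\varphi \mapsto c^{\pm}(\varphi)$. For (2) and (3) the key observation is that, since $\varphi$ is compactly supported, $\Gamma_\varphi$ coincides with the $0$--section outside a compact set, so $F_\varphi$ equals its quadratic part there and $0$ is always a critical value; the elementary monotonicity $c^-(\varphi) = c(1, F_\varphi) \le c(\mu, F_\varphi) = c^+(\varphi)$, together with a comparison against this value at infinity, forces $c^-(\varphi) \le 0 \le c^+(\varphi)$. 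The identity gives $F = Q$ with the single critical value $0$, so $c^{\pm}(\mathrm{id}) = 0$; conversely $c^+(\varphi) = c^-(\varphi) = 0$ forces $F_\varphi$ to have a single critical value, whence (by the Morse theory of $F_\varphi$, whose critical points are the fixed points of $\varphi$) $\Gamma_\varphi$ is the $0$--section and $\varphi = \mathrm{id}$.

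For the duality (4) I would use that $\varphi^{-1}$ admits the generating function $-F_\varphi$ (after the standard reparametrization sending $\Gamma_{\varphi^{-1}}$ to the image of $\Gamma_\varphi$) and that Poincar\'{e} duality on $S^{2n}$ exchanges $1$ and $\mu$; the minimax relation $c(\mu, -F) = -c(1, F)$ then reads $c^+(\varphi^{-1}) = -c^-(\varphi)$. The triangle inequality (5) is the technical heart and the step I expect to be the main obstacle: it requires first the construction of a generating function for the composition $\varphi\psi$ from those of $\varphi$ and $\psi$ (the sum, or gluing, of generating functions) and then the cohomological subadditivity $c(\mu, F_{\varphi\psi}) \le c(\mu, F_\varphi) + c(\mu, F_\psi)$, which rests on the multiplicativity of the Thom classes and the behavior of $\mu$ under the cup-product/diagonal map. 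Finally, conjugation invariance (6) I would deduce softly from (1) together with \emph{spectrality}: connecting $\psi$ to the identity by a path $\psi_s$, the map $s \mapsto c^{\pm}(\psi_s \varphi \psi_s^{-1})$ is continuous by (1) and takes values in the action spectrum $\Lambda(\psi_s\varphi\psi_s^{-1}) = \Lambda(\varphi)$, which is conjugation invariant and, by Sard's theorem, nowhere dense; a continuous function into a nowhere dense set is constant, so $c^{\pm}(\psi\varphi\psi^{-1}) = c^{\pm}(\varphi)$.
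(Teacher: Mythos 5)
Your overall route is the one the paper itself relies on: the paper does not prove this proposition but attributes it to Viterbo, and the only item it argues explicitly (in Section 5) is conjugation invariance (6), by exactly your mechanism --- continuity of $s \mapsto c^{\pm}(\psi_s\varphi\psi_s^{-1})$, spectrality, and invariance of the action spectrum under conjugation, so that a continuous path of spectral values must be constant. Two remarks on your version of (6): Sard's theorem gives that $\Lambda(\varphi)$ has measure zero, but measure zero alone does not imply nowhere dense (think of the rationals); you also need that the spectrum is closed, which holds here because it is the compact set of critical values of a generating function quadratic at infinity. With that addendum your ``connected image inside a nowhere dense subset of $\mathbb{R}$ is a point'' argument is correct, and is in fact more careful than the paper's assertion that the spectrum is ``discrete''. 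Items (1), (2), (4) and (5) of your outline follow the standard Viterbo arguments (Lipschitz estimate, comparison at infinity, the generating function $-F_\varphi$ for $\varphi^{-1}$ plus duality, composition of generating functions plus the cohomological product inequality), consistent with what the paper cites.

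The genuine gap is in (3), the non--degeneracy, which is the deepest point of the proposition. Your argument is: $c^+(\varphi)=c^-(\varphi)=0$ forces $F_\varphi$ to have a single critical value, and then ``by the Morse theory of $F_\varphi$'' the Lagrangian $\Gamma_\varphi$ is the zero section. Neither step holds as stated. First, $c(1,F_\varphi)$ and $c(\mu,F_\varphi)$ are two particular critical values selected by minimax; the interpretation of them as the minimum and maximum of the spectrum is only immediate when the fiber is trivial, so ``$c^\pm=0$ implies a single critical value'' is unjustified for a genuine GFQI. Second, and more seriously, the implication ``all fixed points of $\varphi$ have action zero $\Rightarrow \varphi=\mathrm{id}$'' is precisely the content of non--degeneracy: every compactly supported $\varphi$ has an enormous set of fixed points of action zero (everything outside the support), and plain Morse theory of $F_\varphi$ does not exclude a nontrivial $\varphi$ all of whose interior fixed points also have action zero. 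So your step (3) assumes what it is meant to prove. The missing ingredient is a Lusternik--Schnirelmann/cup--product argument: if $c(1,F)=c(\mu,F)=0$, then, since $\mu=\mu\cup 1$ and the two minimax levels coincide, the class $\mu$ must be nonzero on every neighborhood of the critical set at level $0$; since that critical set maps into the fixed--point set of $\varphi$, the fixed--point set carries the fundamental class of $S^{2n}$ and hence is everything, i.e. $\varphi=\mathrm{id}$. You do invoke cup--product rigidity, but only for the triangle inequality (5); it is equally indispensable for (3).
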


For a compactly supported Hamiltonian symplectomorphism $\varphi$ we now define its \emph{Viterbo norm} by
$$
\lVert \varphi \rVert_V = c^+(\varphi) - c^-(\varphi)\,.
$$
Using Proposition \ref{proposition: properties of spectral invariants for symplectomorphisms} it is easy to prove that $\lVert \cdot \rVert_V$ is a conjugation--invariant norm on $\text{Ham}^c(\mathbb{R}^{2n})$.

The spectral invariant $c^+$ can also be used to define a \emph{symplectic capacity} for domains. Given an open bounded domain $\mathcal{U}$, we define $c(\mathcal{U})$ to be the supremum of the values of $c^{+}(\varphi)$ for all $\varphi$ supported in $\mathcal{U}$. Note that this is well--defined because any bounded domain of $\mathbb{R}^{2n}$ can be displaced, and it can be proved that if $\varphi$ is supported in $\mathcal{U}$ and $\psi$ displaces $\mathcal{U}$ then 
\begin{equation}\label{equation: displacement}
c^+(\varphi) \leq \lVert \psi\rVert_V\,.
\end{equation}
We then extend the definition of the capacity to arbitrary domains in $\mathbb{R}^{2n}$ as follows. For an open domain $\mathcal{V}$ we define $c(\mathcal{V})$ as the supremum of all values of $c(\mathcal{U})$ for $\mathcal{U} \subset \mathcal{V}$ bounded. For an arbitrary domain $A$ we then define $c(A)$ to be the the infimum of the values of $c(\mathcal{V})$ for $\mathcal{V}$ open containing $A$. Because of property (6) in Proposition \ref{proposition: properties of spectral invariants for symplectomorphisms} we have that $c$ is a symplectic invariant, i.e. $c\big(\psi(\mathcal{U})\big) = c(\mathcal{U})$ for all domains $\mathcal{U}$ of $\mathbb{R}^{2n}$ and Hamiltonian symplectomorphisms $\psi$. Moreover we have by definition that $c$ is monotone, i.e. if $\mathcal{U} \subset \mathcal{V}$ then $c(\mathcal{U}) \leq c(\mathcal{V})$. Gromov's non--squeezing theorem follows then from the fact, proved in \cite{Viterbo}, that 
$$
c\big(B^{2n}(R)\big) = c\big(C^{2n}(R)\big) = R\,.
$$
Note also that, if we define the \emph{displacement energy} of a domain $\mathcal{U}$ as the infimum of the norms $\lVert\psi\rVert_V$ for all $\psi$ displacing $\mathcal{U}$, then (\ref{equation: displacement}) implies the \emph{energy--capacity inequality} $c(\mathcal{U}) \leq E(\mathcal{U})$.

In the contact case, generating functions are associated to Legendrian submanifolds of the 1--jet bundle $J^1B$ of a smooth manifold $B$. Given a smooth function $f$ on $B$, its 1--jet $j^1f = \{\, \big(x,df(x),f(x)\big)\, , \, x \in B\,\}$ is a Legendrian submanifold of $J^1B$. More generally, if $F: E \rightarrow \mathbb{R}$ is a function defined on the total space of a fiber bundle over $B$, such that $dF$ is transverse to the fiber normal bundle, then the map
$$
j_F: \Sigma_F \rightarrow J^1B
$$ 
(where, as above, $\Sigma_F$ denotes the space of fiber critical points) which is defined by $$e \mapsto \big(p(e),v^{\ast}(e),F(e)\big)$$ is a Legendrian immersion. The function $F$ is said to be a generating function for the Legendrian submanifold $\widetilde{L_F} = j_F(\Sigma_F)$ of $J^1B$. Critical points of $F$ correspond now not to intersections between $\widetilde{L_F} $ and the 0--section but to Reeb chords between them (intersections correspond to critical points of critical value zero). It was proved independently by Chekanov \cite{C} and Chaperon \cite{Ch2} that, if $B$ is compact, for every Legendrian isotopy in $J^1B$ starting at the 0--section there is a smooth 1--parameter family of generating functions $F_t: B \times \mathbb{R}^{N} \rightarrow \mathbb{R}$ quadratic at infinity. Moreover, as in the symplectic case we know that generating functions quadratic at infinity for Legendrian submanifolds isotopic to the 0--section are essentially unique \cite{Th1, Th3}. Following Bhupal \cite{Bhupal}, and in analogy with the work of Viterbo, we can now associate a generating function quadratic at infinity to every compactly supported contactomorphism $\phi$ of $\mathbb{R}^{2n+1}$ which is contact isotopic to the identity. Indeed we can identify the contact product $\mathbb{R}^{2n+1} \times \mathbb{R}^{2n+1} \times \mathbb{R}$ with the 1--jet bundle $J^1\mathbb{R}^{2n+1}$ by a map that sends the diagonal to the 0--section (see \cite{Bhupal} or \cite{mio 1, mio 3} for explicit formulas). We can then consider the Legendrian submanifold $\Gamma_{\phi}$ in $J^1\mathbb{R}^{2n+1}$ that corresponds to the graph of $\phi$. If we assume $\phi$ to be compactly supported then $\Gamma_{\phi}$ can be seen as a Legendrian submanifold of $J^1S^{2n+1}$. Thus, as in the symplectic case, we can consider a generating function quadratic at infinity for $\phi$ and define by minimax spectral numbers $c^{\pm}(\phi)$. However in the contact case these numbers can not be used to obtain the same applications as in the work of Viterbo, because now the analogues of properties (4), (5) and (6) of Proposition \ref{proposition: properties of spectral invariants for symplectomorphisms} do not hold anymore. 

Let's see in particular why in the contact case the numbers $c^{\pm}$ are not invariant by conjugation. In the symplectic case invariance by conjugation of $c^{\pm}$ follows from the fact that critical points of the generating function of a Hamiltonian diffeomorphism $\varphi$ correspond to fixed points of $\varphi$ (with critical value given by the symplectic action), and fixed points of Hamiltonian symplectomorphisms (and their symplectic action) are invariant by conjugation. The precise argument goes as follows. Let $\varphi$ and $\psi$ be Hamiltonian diffeomorphisms of $\mathbb{R}^{2n}$. We want to show that $c^{\pm}(\varphi) = c^{\pm}(\psi\varphi\psi^{-1})$. Let $\psi_t$ be a Hamiltonian isotopy joining $\psi$ to the identity. For every $t$, the numbers $c^{\pm}(\psi_t\varphi\psi_t^{-1})$ are in the action spectrum of $\psi_t\varphi\psi_t^{-1}$. Since the action spectrum is discrete and invariant by conjugation, and since the maps $t \mapsto c^{\pm}(\psi_t\varphi\psi_t^{\phantom{t}-1})$ are continuous, we conclude in particular that $c^{\pm}(\psi\varphi\psi^{-1}) = c^{\pm}(\varphi)$. In the contact case we have that critical points of the generating function of a contactomorphism $\phi$ of $\mathbb{R}^{2n+1}$ correspond to Reeb chords between $\Gamma_{\phi}$ and the 0--section in $J^1\mathbb{R}^{2n+1}$, hence to Reeb chords between the graph of $\phi$ and the diagonal, hence to translated points of $\phi$. Moreover it can be proved that the critical value of the generating function is equal to the \emph{time--shift} of the corresponding translated point $q$ of $\phi$, i.e. to the difference in the $z$--direction between $q$ and $\phi(q)$. As discussed in Section \ref{section: Rigidity and flexibility in contact topology}, translated points are not invariant by conjugation. Indeed, if $q$ is a translated point of $\phi$ then $\psi(q)$ is in general not a translated point of $\psi\phi\psi^{-1}$ because $\psi$ does not necessarily preserve the Reeb flow. It is at this point that the proof of conjugation--invariance breaks. For similar reasons, the proofs of properties (4) and (5) of Proposition \ref{proposition: properties of spectral invariants for symplectomorphisms} also fail in the contact case. By the discussion in Section \ref{section: Rigidity and flexibility in contact topology} we know that this failure is not just technical, since it is actually impossible in the contact Euclidean space $\mathbb{R}^{2n+1}$ to have the same applications as in the work of Viterbo.

We will now see how, in spite of what we discussed in the previous paragraph, some global rigidity as in the work of Viterbo survives if, instead of $\mathbb{R}^{2n+1}$, we consider the contact manifold $\mathbb{R}^{2n} \times S^1$. We will work with the contact form\footnote{In order to be able to use generating functions we need to work with the standard contact form of $\mathbb{R}^{2n} \times S^1$. However, as we will see, the objects that we will obtain (a contact capacity for domains and a bi--invariant metric on the contactomorphism group) will be invariant by all contactomorphisms (isotopic to the identity), not just those that preserve the chosen contact form.} on $\mathbb{R}^{2n} \times S^1$  which is induced by the standard contact form of $\mathbb{R}^{2n+1}$. The associated Reeb flow is then given by rotations in the $S^1$--direction, and so in particular is 1--periodic. We will see compactly supported contactomorphisms of $\mathbb{R}^{2n} \times S^1$ as contactomorphisms of $\mathbb{R}^{2n+1}$ that are 1--periodic in the $z$--direction and compactly supported in the $(x,y)$--plane. The crucial observation is then that if $\phi$ is a 1--periodic contactomorphism of $\mathbb{R}^{2n+1}$ then translated points of $\phi$ with \emph{integer} time--shift are also fixed points. Thus for the group of 1--periodic contactomorphisms of $\mathbb{R}^{2n+1}$ (compactly supported in the $(x,y)$--plane) translated points with integer time--shift are invariant by conjugation. More precisely we have the following result. Suppose that $k$ is an integer and that $\phi$ and $\psi$ are 1--periodic contactomorphisms of $\mathbb{R}^{2n+1}$. Then a point $q$ of $\mathbb{R}^{2n+1}$ is a translated point of $\phi$ with time--shift $k$ if and only if $\psi(q)$ is a translated point of $\psi\phi\psi^{-1}$ with time--shift $k$. This basic fact is the key ingredient to prove the following result.

\begin{prop}[\cite{mio 1, mio 2}]\label{proposition: properties of spectral invariants for contactomorphisms}
For all contactomorphisms of $\mathbb{R}^{2n+1}$ that are 1--periodic in the $z$--direction and compactly supported in the $(x,y)$--plane (and are isotopic to the identity through contactomorphisms of this form) the following properties holds:
\begin{enumerate}
\item $\lfloor c^-(\phi)\rfloor = - \lceil c^+(\phi^{-1})\rceil$
\item $\lceil c^+(\phi\psi)\rceil \leq \lceil c^+(\phi)\rceil + \lceil c^+(\psi)\rceil$
\item $\lceil c^{\pm}(\phi)\rceil = \lceil c^{\pm}(\psi\phi\psi^{-1})\rceil$ and $\lfloor c^{\pm}(\phi)\rfloor = \lfloor c^{\pm}(\psi\phi\psi^{-1})\rfloor$.
\end{enumerate}
\end{prop}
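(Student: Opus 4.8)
The plan is to transplant the proof of the corresponding real-valued identities of Proposition~\ref{proposition: properties of spectral invariants for symplectomorphisms}, which hold verbatim in the symplectic setting, and to recover in the contact case only their integer-rounded shadows, using as the single new input the rigidity of discriminant points. The ingredients I would take as available are: the generating function quadratic at infinity $F_\phi$ of a lifted $1$-periodic contactomorphism $\phi$ of $\mathbb{R}^{2n+1}$ on $S^{2n+1}\times\mathbb{R}^N$; the minimax selection of $c^\pm(\phi)$ by the unit and top cohomology classes; the continuity of $\phi\mapsto c^\pm(\phi)$; the identification of the critical values of $F_\phi$ with the time-shifts of translated points of $\phi$; and the Reeb-shift normalisation $c^\pm(R^k\phi)=c^\pm(\phi)+k$ for $k\in\mathbb{Z}$, where $R$ denotes the $z$-translation. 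The decisive structural fact, recorded just before the statement, is that for $1$-periodic contactomorphisms the \emph{integer} time-shifts correspond exactly to discriminant points, and that these are conjugation-invariant with unchanged time-shift; consequently the set $S_{\mathrm{int}}(\phi)\subset\mathbb{Z}$ of integer time-shifts is itself a conjugation invariant.

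For property (3) I would fix a $1$-periodic contact isotopy $\psi_t$ joining $\psi$ to the identity and study the continuous path $g(t):=c^+(\psi_t\phi\psi_t^{-1})$, with $g(0)=c^+(\phi)$ and $g(1)=c^+(\psi\phi\psi^{-1})$. Each value $g(t)$ is a time-shift of a translated point of $\psi_t\phi\psi_t^{-1}$, so whenever $g(t)$ is an integer it must lie in $S_{\mathrm{int}}(\psi_t\phi\psi_t^{-1})$, which by the conjugation-invariance above equals the fixed discrete set $S_{\mathrm{int}}(\phi)$ for every $t$. First I would dispose of the case in which $g$ attains no integer value: then $g$ is trapped in a single open unit interval and its ceiling is constant, giving $\lceil c^+(\psi\phi\psi^{-1})\rceil=\lceil c^+(\phi)\rceil$, and the argument for $c^-$ and for the floors is identical. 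This is exactly the discrete-spectrum continuity argument that yields conjugation invariance of the real $c^\pm$ in Proposition~\ref{proposition: properties of spectral invariants for symplectomorphisms}, except that here it survives only \emph{modulo} rounding, because only the integer part of the spectrum is conjugation invariant.

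The hard part will be the remaining possibility that $g$ crosses an integer $m\in S_{\mathrm{int}}(\phi)$ and thereby changes its ceiling. Here I would exploit that such a crossing forces a discriminant point with time-shift $m$, and that by the conjugation-invariance of discriminant points the corresponding critical value of $F_{\psi_t\phi\psi_t^{-1}}$ is present for \emph{all} $t$; the point to establish is then that a minimax value selected by a fixed cohomology class cannot pass across a critical value that persists throughout the deformation. This persistence statement about one-parameter families of generating functions quadratic at infinity, rather than any contact-geometric input, is where I expect the essential difficulty to lie, and I would isolate it as a separate lemma.

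For property (1) I would begin from the generating-function duality underlying the symplectic identity $c^-(\varphi)=-c^+(\varphi^{-1})$, namely that a generating function for $\phi^{-1}$ is obtained by negating one for $\phi$, exchanging the unit and top classes and hence the roles of $c^+$ and $c^-$. In the contact case the exact equality is spoiled by the conformal factor in the inversion formula of Lemma~\ref{lemma: contact Hamiltonians}, but a discriminant point of $\phi$ with integer time-shift $k$ is a discriminant point of $\phi^{-1}$ with time-shift $-k$; combining this sign reversal on the integer spectrum with the duality at the level of the generating functions yields $\lfloor c^-(\phi)\rfloor=-\lceil c^+(\phi^{-1})\rceil$. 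Finally, for property (2) I would represent $\phi\psi$ by the concatenation of isotopies and use the minimax subadditivity $c^+(\phi\psi)\le c^+(\phi)+c^+(\psi)$ that holds up to the contact discrepancy; since Lemma~\ref{lemma: contact Hamiltonians}(1) shows this discrepancy comes only from the conformal factor and is absorbed upon passing to ceilings, the surviving inequality is $\lceil c^+(\phi\psi)\rceil\le\lceil c^+(\phi)\rceil+\lceil c^+(\psi)\rceil$. The recurring theme throughout, and the reason the real identities degrade precisely to their integer roundings, is that non-integer translated points are flexible under conjugation and composition while discriminant points are rigid.
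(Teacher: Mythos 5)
Your overall architecture for property (3) --- conjugating along an isotopy $\psi_t$, following the continuous path $g(t)=c^{\pm}(\psi_t\phi\psi_t^{-1})$ of critical values, splitting into the case where $g$ avoids integers and the case where it meets one, and feeding in the conjugation--invariance of integer time--shift translated points --- is exactly the strategy of the paper (going back to Bhupal). The gap is in how you resolve the hard case. You reduce it to the lemma that \emph{a minimax value selected by a fixed cohomology class cannot pass across a critical value that persists throughout the deformation}, and you assert that this is a statement about one--parameter families of generating functions quadratic at infinity ``rather than any contact--geometric input''. That lemma is false in general: persistence of a critical \emph{value} says nothing about which critical \emph{point} carries the minimax class, and in an arbitrary family an unrelated critical point can sit at the level $m$ forever while the point carrying the class sweeps monotonically past it. (Already for functions on $T^2$: the two saddles carry the two independent classes of $H^1(T^2)$, and their values can exchange order, so the minimax value of one class crosses the constant critical value of the other; this transplants verbatim to the quadratic--at--infinity setting.) So your reduction does not close the proof, and the difficulty cannot be quarantined away from contact geometry.

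What the paper proves instead is the stronger bifurcation statement: \emph{any} path $c_t$ of critical values of $F_t=F_{\psi_t\phi\psi_t^{-1}}$ with $c_{t_0}=k\in\mathbb{Z}$ is constantly equal to $k$. The contact input enters at the level of critical \emph{points}, not values: since $c_{t_0}$ is an integer, the critical point $x_{t_0}$ realizing it corresponds to a discriminant point $q_{t_0}$ of $\psi_{t_0}\phi\psi_{t_0}^{-1}$; conjugation--invariance then produces, for every $t$, a critical point $y_t$ of $F_t$ corresponding to the translated point $\psi_t\bigl(\psi_{t_0}^{-1}(q_{t_0})\bigr)$, with value exactly $k$ and with $y_{t_0}=x_{t_0}$; and Morse--theoretic uniqueness of the continuation of a non--degenerate critical point forces $x_t=y_t$, hence $c_t=k$, near $t_0$ and then everywhere (with an approximation argument for the degenerate case). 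Applied to $g$ itself, which is a path of critical values, this pins $g$ at $k$ the moment it touches $k$. Note that this pinning (``touching an integer implies constant'') is genuinely needed, not merely your ``cannot pass across'': if $c^+(\phi)$ were exactly the integer $k$ while $c^+(\psi\phi\psi^{-1})$ drifted into the interval $(k,k+1)$, no crossing would occur, yet the ceilings would already disagree; your lemma, even if granted, would not exclude this, whereas the paper's claim does. The same bifurcation mechanism --- and not ceilings ``absorbing'' the conformal--factor discrepancies, which you leave unexplained --- is what the paper invokes, together with Viterbo's symplectic arguments, to obtain properties (1) and (2).
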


Following an argument that was originally given by Bhupal \cite{Bhupal} for the case $k=0$, we now explain in particular why (3) is true. Let $\psi_t$ be a contact isotopy (of 1--periodic contactomorphisms) from the identity to $\psi_1 = \psi$, and let $F_t: E \rightarrow \mathbb{R}$ be a 1--parameter family of generating functions for $\psi_t\phi\psi_t^{-1}$. The idea is to study the bifurcation diagram of the 1--parameter family $F_t$ and show that there can be no path $c_t$ of critical values for $F_t$ which crosses an integer $k$. Indeed, suppose that $c_t$ is a path of critical values of $F_t$ with $c_{t_0} = k$ for some $t_0$. Then we claim that $c_t = k$ for all $t$. This can be seen as follows. Let $x_t$ be a path in $E$ such that, for every $t$ in a subinterval of $[0,1]$ containing $t_0$, $x_t$ is a critical point of $F_t$ of critical value $c_t$. Each $x_t$ corresponds to a translated point of $\psi_t\phi\psi_t^{-1}$ with time--shift $c_t$. In particular, $x_{t_0}$ corresponds to a discriminant point of $\psi_{t_0}\phi\psi_{t_0}^{-1}$. Assume first that $x_{t_0}$ is a non--degenerate critical point of $F_{t_0}$. The idea of the proof now is to construct a path $y_t$ in $E$ such that $y_{t_0} = x_{t_0}$ and each $y_t$ is a non--degenerate critical point of $F_t$ with critical value $k$. It will then follow from Morse theory that the two paths $x_t$ and $y_t$ must coincide, so that $c_t = k$ for all $t$. The path $y_t$ can be constructed as follows. Let $q_{t_0} \in \mathbb{R}^{2n+1}$ be the translated point of $\psi_{t_0}\phi\psi_{t_0}^{-1}$ (with time--shift $k$) corresponding to the critical point $x_{t_0}$ of $F_{t_0}$. By the discussion above we have that, for every $t$, $\psi_t\big(\psi_{t_0}^{-1}(q_{t_0})\big)$ is a translated point of $\psi_{t}\phi\psi_{t}^{-1}$ with time--shift $k$. We then take $y_t$ to be the critical point of $F_t$ corresponding to this translated point. Since we assume that $x_{t_0}$ is non--degenerate, it follows from the construction that all $y_t$ are non--degenerate (see \cite{Bhupal, mio 1} for more details). This finishes the proof in the case when $x_{t_0}$ is non--degenerate. The general case follows then from an approximation argument. Properties (1) and (2) of Proposition \ref{proposition: properties of spectral invariants for contactomorphisms} can also be proved by similar arguments, combined with the proofs given by Viterbo of the corresponding properties in the symplectic case.

It follows from Proposition \ref{proposition: properties of spectral invariants for contactomorphisms} (and the fact that the other statements in Proposition \ref{proposition: properties of spectral invariants for symplectomorphisms} go through without changes in the contact case) that the definition of the Viterbo bi--invariant metric and of the Viterbo capacity for domains can be given also in the contact case for $\mathbb{R}^{2n} \times S^1$, but become integer--valued. For a compactly supported contactomorphism $\phi$ of $\mathbb{R}^{2n} \times S^1$ isotopic to the identity we namely obtain an \textbf{integer--valued conjugation--invariant norm} by defining
$$
\lVert \phi \rVert = \lceil c^+(\phi)\rceil - \lfloor c^-(\psi)\rfloor\,.
$$
We also obtain an integer--valued contact invariant for domains of $\mathbb{R}^{2n} \times S^1$ by defining $c(\mathcal{U})$ to be the supremum of all values of  $\lceil c^+(\phi)\rceil$ for $\phi$ supported in $\mathcal{U}$. As in the symplectic case this is well--defined because if $\psi$ is a contactomorphism of $\mathbb{R}^{2n} \times S^1$ that displaces $\mathcal{U}$ then it can be proved that 
\begin{equation}\label{equation: displacement contact}
\lceil c^+(\phi)\rceil \leq \lVert \psi \rVert\,.
\end{equation}
Because of property (3) in Proposition \ref{proposition: properties of spectral invariants for contactomorphisms} the integer number $c$ is indeed a contact invariant, i.e. $c\big(\psi(\mathcal{U})\big) = c(\mathcal{U})$ for all domains $\mathcal{U}$ and contactomorphisms $\psi$ (isotopic to the identity). Since it can be proved that, for a domain $\mathcal{U}$ of $\mathbb{R}^{2n}$, $c(\mathcal{U} \times S^1) = \lceil c(\mathcal{U})\rceil$, the contact non-squeezing theorem of Eliashberg, Kim and Polterovich follows. As in the symplectic case we can define the \emph{displacement energy} of a domain $\mathcal{U}$ of $\mathbb{R}^{2n} \times S^1$ as the infimum of all values $\lVert \psi \rVert$ for all $\psi$ displacing $\mathcal{U}$. By (\ref{equation: displacement contact}) we then have the \textit{energy--capacity inequality} $c(\mathcal{U}) \leq E(\mathcal{U})$.

\begin{rmk}
In \cite{Viterbo} Viterbo defined also a bi--invariant partial order on the Hamiltonian group of $\mathbb{R}^{2n}$, by posing $\phi_1 \leq \phi_2$ if $c^+(\phi_1\phi_2^{-1}) = 0$. In \cite{Bhupal} Bhupal proved that the analogous definition gives rise to a bi--invariant partial order also on the group of compactly supported contactomorphisms of $\mathbb{R}^{2n+1}$. In order to have bi--invariance of the partial order Bhupal proved, by the argument that we have reproduced above to prove property (3) of Proposition \ref{proposition: properties of spectral invariants for contactomorphisms}, that $c^+(\phi) = 0$ if and only if $c^+(\psi\phi\psi^{-1}) = 0$. Note that the argument that we have used above works for $k=0$ also in $\mathbb{R}^{2n+1}$, because in the case $k=0$ we only need to know that translated points of time--shift zero are fixed points (hence invariant by conjugation) which is true also in $\mathbb{R}^{2n+1}$. Of course in the case of $\mathbb{R}^{2n} \times S^1$ we can also define a bi--invariant partial order in the same way. It is proved in \cite{mio 2} that this partial order is compatible with the bi--invariant metric, in the sense that if $\phi_1 \leq \phi_2 \leq \phi_3$ then $d(\phi_1,\phi_2) \leq d(\phi_1,\phi_3)$.
\end{rmk}

As discussed in \cite{mio 2}, the bi--invariant metric that we defined on the contactomorphism group of $\mathbb{R}^{2n} \times S^1$ is (as the Viterbo metric) unbounded but not stably unbounded. Unboundedness is proved by constructing explicitly Hamiltonian diffeomorphisms of $\mathbb{R}^{2n}$ with Viterbo norm arbitrarily large, and then lifting these Hamiltonian diffeomorphisms to $\mathbb{R}^{2n} \times S^1$ to obtain contactomorphisms of arbitrarily large norm. More precisely this can be done as follows. Let $H: \mathbb{R}^{2n} \rightarrow \mathbb{R}$ be the function
$$
H(x_1,y_1, \cdots,x_n,y_n) = \sum_{i=1}^n \frac{\pi}{R} (x_i^2 + y_i^2)
$$
for some $R$ that will be chosen arbitrarily big, and consider $H_{\rho} = \rho \circ H$ where $\rho: [0,\infty) \rightarrow [0,\infty)$ is a function supported in $[0,1]$ with $\rho'' > 0$ in the interior of the support. Take a sequence $\rho_1$, $\rho_2$, $\rho_3$, $\cdots$ of functions of this form, with $\lim_{i \to \infty} \rho_i(0) = \infty$ and $\lim_{i \to \infty} \rho'_i(0) = - \infty$ and such that $H_{\rho_1} \leq H_{\rho_2} \leq H_{\rho_3} \leq \cdots$ with $H_{\rho_i}$ getting pointwise arbitrarily large on $B^{2n}(R)$. Let $\varphi^{\rho_i}$ be the time--1 map of the Hamiltonian flow of $H_{\rho_i}$. Since the $H_{\rho_i}$ are non--negative it can be shown that $c^-(\varphi^{\rho_i}) = 0$ and so $\lVert \varphi^{\rho_i}\rVert_V = c^+(\varphi^{\rho_i})$. Moreover it is proved by Traynor \cite{Traynor} that $c^+(\varphi^{\rho_i})$ tends to $R$ as $i \rightarrow \infty$. Thus by choosing $R$ big enough we can obtain in this way Hamiltonian diffeomorphisms of $\mathbb{R}^{2n}$ of arbitrarily large Viterbo norm. Note now that any compactly supported Hamiltonian diffeomorphism $\varphi$ of $\mathbb{R}^{2n}$ can be lifted to a compactly supported contactomorphism $\widetilde{\varphi}$ of $\mathbb{R}^{2n} \times S^1$, by defining
$$
\widetilde{\varphi} (x,y,z) = \big(\varphi_1(x,y), \varphi_2(x,y), z + F(x,y)\big)
$$
where $\varphi = (\varphi_1,\varphi_2)$ and $F$ is the compactly supported function satisfying $\varphi^{\ast}(\lambda_0) - \lambda_0 = dF$. It was proved in \cite{mio 1} that, for any $\varphi$, $c^+(\widetilde{\varphi}) = c^+(\varphi)$ and $c^-(\widetilde{\varphi}) = c^-(\varphi)$. We can thus obtain compactly supported contactomorphisms of $\mathbb{R}^{2n} \times S^1$ with arbitrarily large norm.

Recall that an unbounded norm $\lVert \cdot \rVert$ on a group $G$ is said to be \emph{stably unbounded} if there is an element $f$ of $G$ such that $\lim_{n \to \infty} \frac{\lVert f^n\rVert}{n} \neq 0$. In our case we have that $\lim_{n \to \infty} \frac{\lVert \phi^n\rVert}{n} = 0$ for all contactomorphisms $\phi$ of $\mathbb{R}^{2n} \times S^1$ because all $\phi^n$ have the same support and, by the definition of the capacity, the norm of a contactomorphism is never bigger than twice the capacity of its support. Thus our bi--invariant metric on the contactomorphism group of $\mathbb{R}^{2n} \times S^1$ is not stably unbounded.

In this section we have seen that, at least in the case of Euclidean space and $\mathbb{R}^{2n} \times S^1$, translated points are the crucial objects to look at in order to study the applications that we are discussing in this paper. Indeed these applications are obtained by doing Morse theory for the generating functions, and translated points are the points that correspond to the critical points of the functions. With respect to fixed points of Hamiltonian symplectomorphisms, translated points have the problem of being flexible, in particular not invariant by conjugation. However translated points that are also fixed points (i.e. discriminant points) are rigid. In $\mathbb{R}^{2n} \times S^1$ it is the rigidity given by discriminant points that allowed us to define a non--trivial bi--invariant metric on the contactomorphism group.

In the next section we will see how to use this idea to define a bi--invariant metric on the universal cover of the contactomorphism group  of any contact manifold.

\begin{rmk}
Mimicking the construction in \cite{mio 2}, Zapolsky \cite{Zapolsky} defined an integer--valued bi--invariant metric on the contactomorphism group of $T^{\ast}B \times S^1$ for any closed manifold $B$. For a contactomorphism $\phi$ of $T^{\ast}B \times S^1$ he considers a generating function quadratic at infinity $F_{\phi}$ for the image $L_{\phi}$ of the 0--section by $\phi$. Note that $F_{\phi}$ does not see the whole $\phi$ but only what $\phi$ does to the 0--section. In particular, only translated points of $\phi$ that are on the 0--section are seen by $F_{\phi}$ (and on the other hand there are critical points of $F_{\phi}$ that do not correspond to translated points of $\phi$, since a Reeb chord between the 0--section and $L_{\phi}$ does not necessarily join a point $x$ of the 0--section to its image $\phi(x)$). However it was proved in \cite{Zapolsky} that if we define
$$
\lVert \phi \rVert = \max_{\psi} \{\,\lceil c^+(\psi\phi\psi^{-1})\rceil - \lfloor c^-(\psi\phi\psi^{-1})\rfloor\,\}
$$
then we obtain a (stably unbounded) conjugation--invariant norm. Note that all translated points of $\phi$ with integer time--shift appear as critical points of some function $F_{\psi\phi\psi^{-1}}$. However the functions $F_{\psi\phi\psi^{-1}}$ also have critical points with no clear geometric meaning for $\phi$. It would be interesting to understand whether it is true that only the critical points that correspond to translated points of contactomorphisms (with integer time--shift) can make the norm jump, or otherwise, if this is not the case, to understand what is the correct geometric interpretation of this norm. 
\end{rmk}


\section{The discriminant metric}\label{section: discriminant metric}

In the previous section we have seen the construction of an integer--valued bi--invariant metric on the contactomorphism group of $\mathbb{R}^{2n} \times S^1$. We have also seen that, for a contact isotopy $\{\phi_t\}$ of $\mathbb{R}^{2n} \times S^1$, if the metric jumps at a time $t_0$ this is due to the presence of some discriminant point for $\phi_{t_0}$. We will now discuss, following \cite{CS}, how to generalize this idea to define an integer--valued bi--invariant metric on the universal cover of the contactomorphism group of any contact manifold.

Let $\{\phi_t\}_{t\in I}$ (for some time interval $I$) be a contact isotopy of a contact manifold $(M,\xi)$. For simplicity we assume that $M$ is compact (in the case of compactly supported contactomorphisms of a non--compact contact manifold all arguments and definitions in this section are analogous, by considering only the interior of the support). We will say that $\{\phi_t\}_{t\in I}$ is an \emph{embedded contact isotopy} if, for all $t$ and $t'$ in $I$ (with $t \neq t'$), the composition $\phi_t \circ \phi_{t'}^{-1}$ has no discriminant points. In other words, $\{\phi_t\}_{t\in I}$ is an embedded contact isotopy if and only if the submanifold $\bigcup_{t\in I} \text{gr}(\phi_t)$ of $M \times M \times \mathbb{R}$ is embedded. The definition of the discriminant metric is based on the following key lemma.

\begin{lemma}\label{lemma: decomposition}
Let $\{\phi_t\}_{t\in [0,1]}$ be a contact isotopy of a contact manifold $(M,\xi)$. After perturbing $\{\phi_t\}_{t\in [0,1]}$ in the same homotopy class with fixed endpoints, there is a positive integer $N$ and a subdivision $0 = t_0 < t_1 < \cdots < t_{N-1} < t_N = 1$ such that, for all $i = 0$, $\cdots$, $N-1$, the contact isotopy $\{\phi_t\}_{t\in [t_i,t_{i+1}]}$ is embedded.
\end{lemma}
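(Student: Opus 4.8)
The plan is to work directly from the pairwise definition of embeddedness: $\{\phi_t\}_{t\in[a,b]}$ is embedded precisely when, for all $t\neq t'$, the contact graphs $\mathrm{gr}(\phi_t)$ and $\mathrm{gr}(\phi_{t'})$ are disjoint, i.e. $\phi_t\circ\phi_{t'}^{-1}$ has no discriminant point. Set $P=\{(t,t')\in[0,1]^2:\ t\neq t',\ \mathrm{gr}(\phi_t)\cap\mathrm{gr}(\phi_{t'})\neq\emptyset\}$; the goal is a subdivision $0=t_0<\dots<t_N=1$ with $P\cap[t_i,t_{i+1}]^2=\emptyset$ for every $i$. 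Two features of $P$ will drive everything: it is symmetric under $(t,t')\mapsto(t',t)$, and its diagonal is excluded by definition, so the entire difficulty is that $P$ may accumulate onto the diagonal.

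First I would perturb the isotopy. Fixing $\phi_0$ and $\phi_1$ amounts to perturbing the generating contact Hamiltonian $H_t$ on the open interval, which I do so as to put $\{H_t\}$ in general position: each $H_t$ is Morse away from finitely many times (births and deaths), and $0$ is a regular value of $H_t$ except at finitely many \emph{bad times} $t_1^*,\dots,t_m^*$, at each of which $H_{t_j^*}$ is Morse with exactly one critical point $q_j^*$ at level $0$, whose critical value crosses $0$ transversally as $t$ varies. This is the standard one--parameter (Cerf--type) genericity for paths of functions, and a small enough such perturbation stays in the prescribed homotopy class rel endpoints.

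The heart of the argument is the local analysis near the diagonal. For $t'$ close to $t$ the composition $\phi_t\circ\phi_{t'}^{-1}$ is $\mathcal{C}^1$--close to the identity, so (as recalled before the statement) its contact graph is a Legendrian section $j^1 f_{t,t'}$ of $J^1\Delta$; discriminant points of $\phi_t\circ\phi_{t'}^{-1}$ correspond exactly to critical points of $f_{t,t'}$ with critical value $0$, and $f_{t,t'}=-(t'-t)H_t+O((t'-t)^2)$. Dividing by $(t'-t)$ gives a genuine smooth family $T_{t,t'}=f_{t,t'}/(t'-t)$ with $T_{t,t}=-H_t$, and for $t\neq t'$ the off--diagonal intersecting pairs are precisely the critical points of $T_{t,t'}$ at level $0$. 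From $T_{t,t}=-H_t$ one sees that a sequence in $P$ can converge to a diagonal point $(\tau,\tau)$ only if $H_\tau$ has a critical point at level $0$, i.e. only at a bad time; hence $\overline{P}$ meets the diagonal exactly in the points $(t_j^*,t_j^*)$. Around such a point $q_j^*$ is nondegenerate, so a single smooth branch $\tilde v(t,t')$ of critical value of $T_{t,t'}$ survives, $\overline{P}$ is locally the curve $\{\tilde v=0\}$, and the transverse $0$--crossing of the critical value of $H_t$ gives $(\partial_t+\partial_{t'})\tilde v(t_j^*,t_j^*)\neq 0$, i.e. $\{\tilde v=0\}$ is a smooth curve transverse to the diagonal. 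Since $P$ is symmetric in $(t,t')$, the tangent line of this curve is invariant under the coordinate swap; being transverse to the diagonal, it must be the \emph{anti--diagonal}. Consequently, near each bad time every intersecting pair satisfies $t<t_j^*<t'$ or $t'<t_j^*<t$: such pairs \emph{straddle} the bad time.

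Finally I would assemble the subdivision. For each non--bad $\tau$ the non--accumulation statement yields a square $[\tau-\eta_\tau,\tau+\eta_\tau]^2$ disjoint from $P$; for each bad time $\tau=t_j^*$ the straddling property yields $\eta_\tau$ so that no point of $P\cap[\tau-\eta_\tau,\tau+\eta_\tau]^2$ has both coordinates on the same side of $\tau$. I then take the subdivision to be the finitely many bad times together with a uniform grid of mesh smaller than a Lebesgue number of the open cover $\{(\tau-\eta_\tau,\tau+\eta_\tau)\}_\tau$ of $[0,1]$. Each subinterval $[t_i,t_{i+1}]$ lies in some $(\tau-\eta_\tau,\tau+\eta_\tau)$: if $\tau$ is non--bad the square $[t_i,t_{i+1}]^2$ is already $P$--free, while if $\tau$ is bad then $\tau$ is a subdivision point, so $[t_i,t_{i+1}]$ lies on one side of $\tau$ and straddling again forces $P\cap[t_i,t_{i+1}]^2=\emptyset$; hence each piece is embedded. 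The main obstacle is exactly the behaviour at the bad times: the naive ``continuity plus compactness'' argument fails there, because intersecting pairs accumulate onto the diagonal from both sides, and the real content is to prove—using the symmetry of $P$ together with the transverse $0$--crossing—that this accumulation is tangent to the anti--diagonal, so that cutting precisely at the bad times eliminates it.
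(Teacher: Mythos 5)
Your overall strategy is exactly the one the paper itself points to: the paper does not actually prove Lemma \ref{lemma: decomposition}, it only records the key point --- that discriminant points, unlike fixed points of Hamiltonian diffeomorphisms, generically do not exist --- and refers to \cite{CS} for the details. The heart of your proposal is a correct and rather careful implementation of that genericity idea: the passage to the divided family $T_{t,t'}=f_{t,t'}/(t'-t)$ with $T_{t,t}=-H_t$, the observation (using compactness of $M$, which is assumed in that section) that the bad set $P$ can accumulate on the diagonal only at times where $H_t$ has a critical point on its zero level, the symmetry-plus-transversality argument forcing the local branch of $\overline{P}$ at such a time to be tangent to the anti-diagonal, and the resulting straddling property together with the Lebesgue-number assembly are all sound. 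For comparison, a softer route --- the one needed anyway for the refinement quoted in Section \ref{section: orderability and the oscillation metric}, where the pieces are moreover non-negative or non-positive --- is to homotope the isotopy rel endpoints into a concatenation of strictly positive and strictly negative paths by inserting Reeb-flow zigzags, and to note that a strictly monotone path is embedded on sufficiently short time intervals for the trivial reason that there the difference function $f_{t,t'}$ has no zeros at all, not merely no critical points at level zero. That route avoids Cerf theory entirely; yours gives finer information on the structure of the set of non-embedded pairs.

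There is, however, one genuine gap, and it occurs at your first step: the claim that fixing $\phi_0$ and $\phi_1$ ``amounts to perturbing the generating contact Hamiltonian $H_t$ on the open interval'' is false. The time-one map is a functional of the entire Hamiltonian, not of its values near $t=0,1$: an arbitrarily $\mathcal{C}^\infty$-small perturbation of $H_t$ supported in $0<t<1$ will in general move $\phi_1$, so such perturbations neither fix the endpoint nor, a fortiori, stay in a homotopy class rel endpoints. The Cerf-type general position you invoke must therefore be achieved inside the space of paths with both endpoints fixed, which is a nonlinear constraint on the allowed perturbations. This is fixable by standard means --- for instance, perturb by $\phi^u_t=\theta^u_{\beta(t)}\circ\phi_t$, where $\beta:[0,1]\to[0,\infty)$ vanishes at $t=0$ and $t=1$ and $\{\theta^u_s\}$ is a finite-dimensional family of contact isotopies starting at the identity with $\theta^0_s=\text{id}$; every $\phi^u$ then has the same endpoints as $\phi$ and the homotopy $(s,t)\mapsto\theta^u_{s\beta(t)}\circ\phi_t$ keeps it in the same class rel endpoints, and one must check that this restricted class of perturbations still achieves the transversality defining your bad-time normal form --- but that verification is a real step, not a remark, and without it the normal form on which the entire diagonal analysis rests is unjustified. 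Everything after that step is fine.
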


Note that in the symplectic case the analogue of the above lemma does not work. Indeed, because of the Arnold conjecture we cannot get rid of fixed points of Hamiltonian isotopies, not even on a small time interval. As we discussed in Section \ref{section: Rigidity and flexibility in contact topology}, in the contact case we do have an analogue of the Arnold conjecture but the statement involves translated points of contactomorphisms. On the other hand, as we have seen, discriminant points of contactomorphisms generically do not exist, and this is what gives us the room to prove Lemma \ref{lemma: decomposition} (see \cite{CS} for details).  

We define the \textbf{discriminant norm} 
$$
\lVert [\{\phi_t\}] \rVert_{\text{discr}}
$$ 
of an element in the universal cover of the contactomorphism group  of $(M,\xi)$ which is different than the identity as the minimal number of pieces in a representation as in Lemma \ref{lemma: decomposition}. We also declare the discriminant norm of the identity to be zero\footnote{In other words, the discriminant metric is the word metric on $\widetilde{\text{Cont}}_0(M,\xi)$ with respect to the generating set formed by embedded (non--constant) contact isotopies (see Gal end Kedra \cite{GK}).}. It is immediate to prove that this definition gives rise to a bi--invariant metric on $\widetilde{\text{Cont}}_0(M,\xi)$. Note in particular that bi--invariance follows from the fact that, as discussed in Section \ref{section: Rigidity and flexibility in contact topology}, discriminant points are invariant by conjugation. Note also that the definition of this metric does not depend on the choice of a contact form for $\xi$, since the notion of discriminant point does not depend on this choice. Again we refer to \cite{CS} for more details.

The definition of $\lVert \, \cdot \, \rVert_{\text{discr}}$ and the proof of the fact that it is a conjugation--invariant norm only rely on elementary arguments. On the other hand the deep result about the discriminant metric is that, at least for certain contact manifolds, it is not equal (nor equivalent) to the trivial metric. Lemma \ref{lemma: decomposition} says that \emph{locally} we can always get rid of discriminant points. If it was true that by deforming any given contact isotopy in the same homotopy class we could always get rid of all discriminant points in the whole time interval, then this would mean that the discriminant metric is always equal to the trivial metric. On the other hand if, in the universal cover of some contact manifold, we find an element $[\{\phi_t\}]$ for which the discriminant length is bigger than one then this means that, no matter how we deform $\{\phi_t\}$ in its homotopy class, we are always obliged to have discriminant points at some time. This is then a global rigidity result that contrasts with the local flexibility given by Lemma \ref{lemma: decomposition}. In \cite{CS} we show that such examples exist, in particular we prove that for $\mathbb{R}^{2n} \times S^1$ and $\mathbb{R}P^{2n-1}$ the discriminant metric is unbounded (and thus not even equivalent to the trivial metric).

\begin{thm}[\cite{CS}]\label{theorem: unboundedness of discriminant metric}
The discriminant metric on the universal cover of the contactomorphism group of $\mathbb{R}^{2n} \times S^1$ and $\mathbb{R}P^{2n-1}$ is unbounded.
\end{thm}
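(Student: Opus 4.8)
The plan is to bound the already-known unbounded spectral norm from below by a constant multiple of the discriminant norm, and then to run an analogous argument on $\mathbb{R}P^{2n-1}$ using Givental's non-linear Maslov index in place of the spectral invariants. The guiding principle in both cases is the same: the relevant integer-valued invariant can jump only when a discriminant point appears, so it stays essentially constant along each embedded piece of a decomposition as in Lemma~\ref{lemma: decomposition}. Consequently an element whose invariant is large cannot be written with few embedded pieces, and unboundedness of the invariant forces unboundedness of the discriminant metric.

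For $\mathbb{R}^{2n}\times S^1$ I would argue as follows. Let $[\{\phi_t\}]$ be an element of the universal cover with discriminant norm $N$, and choose a representative with a subdivision $0=t_0<\dots<t_N=1$ whose pieces $\{\phi_t\}_{t\in[t_i,t_{i+1}]}$ are all embedded. Writing $\psi_i := \phi_{t_{i+1}}\phi_{t_i}^{-1}$ we have $\phi_1 = \psi_{N-1}\cdots\psi_0$, and each $\psi_i$ is the time-$1$ map of the embedded contact isotopy $s\mapsto \phi_{t_i+s(t_{i+1}-t_i)}\phi_{t_i}^{-1}$, which starts at the identity. The triangle inequality for the spectral invariants (property~(2) of Proposition~\ref{proposition: properties of spectral invariants for contactomorphisms}) then gives $\lceil c^+(\phi_1)\rceil \leq \sum_i \lceil c^+(\psi_i)\rceil$, and similarly, using $-\lfloor c^-(\phi_1)\rfloor = \lceil c^+(\phi_1^{-1})\rceil$ (property~(1)) together with the fact that $\phi_1^{-1}$ factors through the reversed pieces, $-\lfloor c^-(\phi_1)\rfloor \leq \sum_i \lceil c^+(\psi_i^{-1})\rceil$. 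So everything reduces to the claim that each $\lceil c^+(\psi_i)\rceil$ and each $\lceil c^+(\psi_i^{-1})\rceil$ is at most $1$, which would yield $\lVert\phi_1\rVert \leq 2N$.

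This per-piece estimate is the crux of the argument. Along the embedded isotopy reaching $\psi_i$ the maps have no discriminant points for $s>0$, and for $\mathbb{R}^{2n}\times S^1$ discriminant points are exactly the translated points with integer time-shift, i.e.\ the critical points of the generating function with integer critical value. Since the selected critical value $c^+(\cdot_s)$ is continuous in $s$, equals $0$ at $s=0$, satisfies $c^+\geq 0$ (carried over from Proposition~\ref{proposition: properties of spectral invariants for symplectomorphisms}), and can never be an integer for $s>0$, it is trapped in the interval $(0,1)$; hence $\lceil c^+(\psi_i)\rceil\leq 1$, and symmetrically for $c^-$ and for the inverse pieces. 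This gives $\lVert\phi\rVert_{\text{spec}}\leq 2\,\lVert[\{\phi_t\}]\rVert_{\text{discr}}$, and since we have already exhibited contactomorphisms of $\mathbb{R}^{2n}\times S^1$ of arbitrarily large spectral norm, the discriminant metric on $\widetilde{\text{Cont}}_0(\mathbb{R}^{2n}\times S^1)$ is unbounded.

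For $\mathbb{R}P^{2n-1}$ I would replace the spectral invariants by Givental's non-linear Maslov index, which is a homogeneous quasimorphism on $\widetilde{\text{Cont}}_0(\mathbb{R}P^{2n-1})$ whose integer jumps along a contact isotopy are again produced precisely by discriminant points of the $\phi_t$. The same bookkeeping --- bounded change per embedded piece, together with the bounded defect of the quasimorphism applied to the factorization $\phi_1=\psi_{N-1}\cdots\psi_0$ --- shows that the Maslov index of an element is controlled by a constant times its discriminant norm, so unboundedness of the non-linear Maslov index (for instance on iterates of the lift of the Reeb flow) yields unboundedness of the discriminant metric. The main obstacle in both cases is exactly this per-piece control: one must check carefully that \emph{embedded} (no discriminant points of $\phi_t\phi_{t'}^{-1}$) really does pin the integer-valued invariant to a single value on each piece, rather than merely preventing some jumps. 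The cleanest route is the continuity-plus-no-integer-crossing argument above in the spectral case, and the analogous non-degeneracy and continuity analysis of the non-linear Maslov index in the case of $\mathbb{R}P^{2n-1}$.
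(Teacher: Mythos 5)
Your proposal is correct and takes essentially the same approach as the paper: for $\mathbb{R}^{2n}\times S^1$ the paper also combines continuity of $c^+$ along isotopies, the fact that an integer value of $c^+$ forces a translated point with integer time--shift (i.e.\ a discriminant point, so it cannot occur inside an embedded piece), the triangle inequality for $\lceil c^+ \rceil$, and the existence of contactomorphisms with arbitrarily large $c^+$; for $\mathbb{R}P^{2n-1}$ it likewise relies on Givental's non--linear Maslov index, with the quasimorphism defect accounting for a bounded loss. The only difference is one of bookkeeping direction --- the paper iterates the integer--crossing argument to conclude $\lVert \phi \rVert_{\text{discr}} \geq k+1$ whenever $c^+(\phi) > k$, whereas you package the very same ingredients as the comparison inequality $\lceil c^+(\phi)\rceil - \lfloor c^-(\phi)\rfloor \leq 2\, \lVert [\{\phi_t\}] \rVert_{\text{discr}}$ --- which is an equivalent formulation (up to a factor of $2$) of the same argument.
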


Unboundedness of the discriminant metric on the universal cover of the contactomorphism group of $\mathbb{R}^{2n} \times S^1$ is proved using the spectral invariant $c^+$ that we described in Section \ref{section: spectral metric}. In fact in this case we can prove that even the norm induced on the contactomorphism group itself, i.e. 
$$
\lVert \phi \rVert_{\text{discr}} := \inf \{\, \lVert [\{\phi_t\}] \rVert_{\text{discr}} \; \text{ , } \; \phi_1 = \phi \,\}
$$
is unbounded. Indeed, suppose first that $\phi$ is a contactomorphism with $c^+(\phi) > 1$. Then for all contact isotopies $\{\phi_t\}$ with $\phi_1 = \phi$ we must have a time $t_0$ such that $c^+(\phi_{t_0}) = 1$. Since $c^+(\phi_{t_0}) = 1$, $\phi_{t_0}$ must have a translated point of time--shift $1$, i.e. a discriminant point. This proves that if $c^+(\phi) > 1$ then every contact isotopy from the identity to $\phi$ must have at least $2$ embedded pieces, and thus $\lVert \phi \rVert_{\text{discr}} \geq 2$. Suppose now that $c^+(\phi) > 2$ and let $\{\phi_t\}$ be any contact isotopy connecting $\phi = \phi_1$ to the identity. As above we know that there is a time $t_0$ such that $c^+(\phi_{t_0}) = 1$. By the triangle inequality (property (2) of Proposition \ref{proposition: properties of spectral invariants for contactomorphisms}) we also know that $c^+(\phi \circ \phi_{t_0}^{-1}) > 1$. By the same argument as above it then follows that $\{\phi_t\}_{t \in (t_0,1]}$ has at least two embedded pieces, and so $\lVert \phi \rVert_{\text{discr}} \geq 3$. By iterating this argument we see thus that if $c^+(\phi) > k$ then $\lVert \phi \rVert_{\text{discr}} \geq k+1$. Since, as we have seen in Section \ref{section: spectral metric}, there are contactomorphisms of $\mathbb{R}^{2n} \times S^1$ with arbitrarily large $c^+$ we conclude that the discriminant norm in $\mathbb{R}^{2n} \times S^1$ is unbounded.

Unboundedness of the discriminant norm on the universal cover of the contactomorphism group of $\mathbb{R}P^{2n-1}$ is proved using Givental's non--linear Maslov index \cite{Givental - Nonlinear Maslov index}. In this case we only prove that the norm is unbounded on the universal cover of the contactomorphism group, not on the contactomorphism group itself. More precisely we show that if $\{\phi_t\}$ is the $4k$-th iteration of the Reeb flow then the discriminant norm of the corresponding element of the universal cover is at least $k + 1$. Note that although the contact isotopy $\{\phi_t\}$ itself is the concatenation of $4k$ embedded pieces, in principle it might be possible to find another contact isotopy in the same homotopy class with smaller discriminant length. This is for instance what happens in the case of the sphere, where it is known that every element of the universal cover of the contactomorphism group has discriminant length at most $4$ (see Theorem \ref{theorem: boundedness} below). For projective space we prove that every contact isotopy in the same homotopy class as the $4k$-th iteration of the Reeb flow has length at least $k + 1$. The fact that in our proof we loose some of the length is related to the fact that the non--linear Maslov index is a \emph{quasimorphism} on the universal cover of the contactomorphism group and not a homomorphism (which, as it is known, cannot exist). However we do not know whether our result is sharp, i.e. whether indeed it does exist a contact isotopy of length $k+1$ which is homotopic to the $4k$-th iteration of the Reeb flow.

Theorem \ref{theorem: unboundedness of discriminant metric} should be compared with the following soft result.

\begin{thm}[\cite{CS}]\label{theorem: boundedness}
The discriminant norm is bounded for $\mathbb{R}^{2n+1}$ and $S^{2n-1}$. More precisely, every contact isotopy of $\mathbb{R}^{2n+1}$ has length at most $2$, and every contact isotopy of $S^{2n-1}$ has length at most $4$.
\end{thm}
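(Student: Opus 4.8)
The plan is to prove the Euclidean bound first and then deduce the spherical bound from it. For $\mathbb{R}^{2n+1}$ I would start from an arbitrary compactly supported contact isotopy $\{\phi_t\}_{t\in[0,1]}$ with $\phi_0=\text{id}$ and exhibit a representative of $[\{\phi_t\}]$ that is a concatenation of exactly two embedded pieces. Write $\theta_s$ for the Reeb flow $(x,y,z)\mapsto(x,y,z+s)$ (replaced, where compact support is needed, by a compactly supported contactomorphism agreeing with it on a fixed compact neighbourhood of the support). I would take the first piece to be $\Phi_t:=\theta_{Lt}\circ\phi_t$, $t\in[0,1]$, and the second to be $\gamma_s:=\theta_{(1-s)L}\circ\phi_1$, $s\in[0,1]$; their concatenation runs from $\text{id}$ to $\theta_L\phi_1$ and back to $\phi_1$. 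A one--parameter rescaling $L\mapsto rL$ gives a homotopy rel endpoints from this concatenation (at $r=1$) to $\{\phi_t\}$ followed by a constant path (at $r=0$), so the two represent the same element of $\widetilde{\text{Cont}}_0(\mathbb{R}^{2n+1},\xi_0)$.

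The key point is that both pieces are embedded once $L$ is chosen large. For the second piece one computes $\gamma_s\gamma_{s'}^{-1}=\theta_{(s'-s)L}$, a pure $z$--translation by a nonzero amount whenever $s\neq s'$, which has no fixed points at all and hence no discriminant points. For the first piece the crucial observation is that $z(\Phi_t(q))=Lt+z(\phi_t(q))$, so if $L$ exceeds $C:=\sup_{t,q}\lvert\partial_t(z\circ\phi_t)(q)\rvert$ (finite by compactness of the support in space and time) then $t\mapsto z(\Phi_t(q))$ is strictly increasing for every $q$. Consequently $\Phi_t(q)\neq\Phi_{t'}(q)$ for $t\neq t'$, so $\Phi_{t'}^{-1}\Phi_t$ has no fixed points and in particular no discriminant points, and $\{\Phi_t\}$ is embedded. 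This yields $\lVert[\{\phi_t\}]\rVert_{\text{discr}}\leq 2$, and it is precisely here that the flexibility of $\mathbb{R}^{2n+1}$ is used: the non--compact Reeb direction underlying the squeezing map (\ref{equation: squeezing}) provides room to push the entire support to infinity in $z$ faster than $\phi$ can move it.

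For $S^{2n-1}$ the idea is to reduce to two copies of the Euclidean case. I would cover $S^{2n-1}$ by the two Darboux charts $U_1=S^{2n-1}\setminus\{p_1\}$ and $U_2=S^{2n-1}\setminus\{p_2\}$, each strictly contactomorphic to $(\mathbb{R}^{2n-1},\xi_0)$, and write the class $[\{\phi_t\}]$ as a product $[\{\psi_t\}]\cdot[\{\rho_t\}]$ of two classes, one supported in $U_1$ and one in $U_2$ (using a partition of unity subordinate to $\{U_1,U_2\}$ to split the contact Hamiltonian, together with the transformation laws of Lemma \ref{lemma: contact Hamiltonians} for the inverse and composition). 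Each factor is then a compactly supported isotopy of the corresponding $\mathbb{R}^{2n-1}$, to which the Euclidean bound applies, giving a decomposition of each into at most two embedded pieces. Since an isotopy that is embedded and supported in a chart remains embedded in $S^{2n-1}$ (discriminant points outside the interior of the support are not counted), the triangle inequality for $\lVert\,\cdot\,\rVert_{\text{discr}}$ yields $\lVert[\{\phi_t\}]\rVert_{\text{discr}}\leq 2+2=4$.

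The main obstacle I expect lies on the two sides in different places. On the Euclidean side it is the compactly supported bookkeeping: one must replace the ambient translation $\theta_s$ by a compactly supported contactomorphism and verify that strict monotonicity of $z\circ\Phi_t$, hence embeddedness, survives on the interior of the support of the modified push, including the transition region where the cutoff interpolates between the Reeb flow and the identity. On the spherical side the delicate step is to realize $[\{\phi_t\}]$ as a product of \emph{exactly two} chart--supported classes, rather than the many alternating factors a naive fragmentation produces; controlling this splitting, and checking that the embeddedness and norm bounds transfer faithfully between a chart and the whole sphere, is the technical heart that accounts for the factor $4$.
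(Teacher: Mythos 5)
The paper itself states Theorem \ref{theorem: boundedness} without proof, quoting it from \cite{CS}, so your proposal can only be measured against the argument given there. Your Euclidean half is correct and is in essence that argument: one escapes the discriminant by composing with a (cut--off) Reeb translation that outpaces the isotopy, and then unwinds the translation in a second piece. The technical point you flag --- that the cut--off flow must itself create no discriminant points in the interior of its support --- can indeed be closed: taking the cut--off Hamiltonian of the form $H=\chi(r)\beta(z)$, with $r=\lvert(x,y)\rvert$ and $\chi,\beta\geq 0$ non--increasing bumps, the contact Hamilton equations (\ref{equation: contact Hamiltonians}) give $\dot z = H-\tfrac{1}{2}\sum_i\bigl(x_iH_{x_i}+y_iH_{y_i}\bigr)=\beta\bigl(\chi-\tfrac{1}{2}r\chi'\bigr)$, which is strictly positive wherever the contact vector field is non--zero; hence the modified translation has strictly increasing $z$ along every non--constant orbit, both of your pieces are embedded in the relative sense used for compactly supported isotopies, and $\lVert\,\cdot\,\rVert_{\text{discr}}\leq 2$ on $\mathbb{R}^{2n+1}$ follows.

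The sphere half, however, has a genuine gap, and it sits exactly in your parenthetical ``discriminant points outside the interior of the support are not counted''. That relativization is part of the definition only in the non--compact, compactly supported setting; $S^{2n-1}$ is compact, and there an isotopy is embedded only if $\phi_t\circ\phi_{t'}^{-1}$ has no discriminant points \emph{anywhere}. If $\{\psi_t\}$ is supported in a chart $U_1=S^{2n-1}\setminus\{p_1\}$, then for every $t\neq t'$ every point outside the support is fixed by $\psi_t\psi_{t'}^{-1}$ with vanishing conformal factor, i.e.\ is a discriminant point; so no chart--supported isotopy --- in particular none of the pieces produced by your Euclidean construction --- is ever embedded as an isotopy of $S^{2n-1}$, and the bound $2+2=4$ does not follow. (A smaller, fixable inaccuracy: the two--factor splitting cannot be obtained from a fixed partition of unity, since flows do not add; one instead takes $\psi_t$ to be the flow of $\chi_tH_t$ for a time--dependent cut--off equal to $1$ near the trajectory $\phi_t(p_1)$ and vanishing near a point $p_2$ chosen off the swept--out neighbourhood, and sets $\rho_t=\psi_t^{-1}\phi_t$.) The missing ingredient is global: each admissible piece must move, or conformally distort, every point of the sphere, so some fixed--point--free global dynamics has to be built into the pieces. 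The natural repair is to insert the Hopf rotation $e_{\delta t}$ into each piece, i.e.\ to use pieces of the form $e_{\delta t}\circ\tilde\theta_{Lt}\circ\psi_t$ together with their unwindings: the maps $e_s$ have no fixed points for small $s\neq 0$, and under the identification $S^{2n-1}\setminus\{p_1\}\cong(\mathbb{R}^{2n-1},\xi_0)$ the Hopf flow is generated by a positive Hamiltonian of quadratic type, roughly $c\bigl(1+\lvert x\rvert^2+\lvert y\rvert^2+z^2\bigr)$, whose chart $z$--coordinate is strictly increasing --- so the rotation cooperates with, rather than cancels, the chart translation, and one can then check absence of discriminant points for all pairs $t\neq t'$ on all of $S^{2n-1}$. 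Some such sphere--specific dynamical input is unavoidable: as written, your scheme uses nothing about $S^{2n-1}$ beyond the existence of large charts, whereas embeddedness on a compact manifold is a global condition and boundedness is genuinely delicate --- by Theorem \ref{theorem: unboundedness of discriminant metric} it already fails for the compact manifold $\mathbb{R}P^{2n-1}$.
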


Note that the difference in the behaviour of the discriminant metric in the cases of $\mathbb{R}^{2n+1}$ and $\mathbb{R}^{2n} \times S^1$ is similar to what happens for the non--squeezing phenomenon. For $\mathbb{R}^{2n+1}$ we have a complete flexibility, while if we make the Reeb flow 1--periodic and consider instead $\mathbb{R}^{2n} \times S^1$ then some global rigidity appears. On the other hand, our result for the sphere is consistent with the fact, proved by Fraser, Polterovich and Rosen \cite{FPR}, that on the contactomorphism group of $S^{2n-1}$ there does not exist any unbounded bi--invariant metric. We do not know whether the bounds in Theorem \ref{theorem: boundedness} are sharp, i.e. for example if there exists an element on the universal cover of the contactomorphism group of $S^{2n-1}$ of length exactly $4$.

It is still not well understood what it is that makes the discriminant metric bounded or unbounded. The case of $S^{2n-1}$ shows that having a 1--periodic Reeb flow is not enough. On the other hand an important difference between the case of $S^{2n-1}$ on  one side and $\mathbb{R}P^{2n-1}$ and $\mathbb{R}^{2n} \times S^1$ on the other side is that in the case of the sphere all closed Reeb orbits are contractible, while in the other two cases the Reeb flow generates the fundamental group of the manifold. Thus the way the Reeb flow interacts with the topology of the underlying manifold seems to play an important role in the question of whether the discriminant metric on a given contact manifold is bounded or unbounded. At the moment $\mathbb{R}^{2n} \times S^1$ and $\mathbb{R}P^{2n-1}$ are the only known cases of contact manifolds with unbounded discriminant metric. As we have discussed, both results are obtained using generating functions. Still using generating functions, and based on techniques developed by Givental \cite{Givental - Nonlinear Maslov index, Givental - toric}, in a work in progress with Gustavo Granja, Yael Karshon and Milena Pabiniak we are exploring the possibility to study the discriminant metric and other contact rigidity phenomena for more general contact toric manifolds. In particular, we are addressing the following question. Given a prequantizable symplectic toric manifold, for which prequantizations is the discriminant metric unbounded? (Note for example that both $S^{2n-1}$ and $\mathbb{R}P^{2n-1}$ are prequantizations of $\mathbb{C}P^{n-1}$, and in one case the discriminant metric is bounded while in the other it is unbounded). The reason why there are so few results about unboundedness of the discriminant metric is that at the moment it is still not known how to use $J$--holomorphic curves to study this problem. I hope that the Floer homology theory for translated points that I am developing \cite{Floer homology for translated points}, as well as the work in progress by Z\'{e}na\"{i}di \cite{Zenaidi}(which is based on Legendrian contact homology), will provide good tools for this problem.


\section{Orderability and the oscillation metric}\label{section: orderability and the oscillation metric}

In this section we will first discuss the notion of orderability, which was introduced by Eliashberg and Polterovich in \cite{EP00}, and then explain (still following \cite{CS}) how to combine this notion with the bi--invariant metric described in the previous section in order to obtain what we call the oscillation metric. In this whole section we assume that $(M,\xi)$ is compact.

Note first that on any contact manifold $(M,\xi)$ there is a natural notion of a \textbf{positive contact isotopy}. Namely we say that a contact isotopy $\{\phi_t\}$ is positive if it moves every point in a direction positively transverse to the contact distribution $\xi$. Note that, for any choice of a contact form $\alpha$ for $\xi$, the contact isotopy $\{\phi_t\}$ is positive if and only if $\alpha(X_t) > 0$ where $X_t$ is the vector field generating $\phi_t$. Thus we see that a contact isotopy is positive if and only if it is generated by a positive Hamiltonian  $H_t$ (recall that $H_t = \alpha(X_t)$). Note that this notion is specific of the contact case. Indeed in the symplectic case Hamiltonian functions are only defined up to the addition of a constant and thus, by choosing the constant big enough, any Hamiltonian isotopy of a compact symplectic manifold can be generated by a positive Hamiltonian \footnote{On the other hand, if we normalize the Hamiltonians by $\int H \, \omega^n = 0$ then they are never positive everywhere. If the symplectic manifold $W$ is open then we can normalize the Hamiltonians by requiring them to be compactly supported. In this case it makes sense to call a Hamiltonian isotopy positive if it is generated by  a Hamiltonian function that is positive in the interior of the support. Similarly it also makes sense to define in the same way positive contact isotopies for open contact manifolds. However this positivity notion for open contact manifolds misses the specifically contact flavour of positive contact isotopies on closed contact manifolds. For instance the fact, proved by Bhupal, that the above notion of positive contact isotopy induces a partial order on the group of compactly supported contactomorphisms of $\mathbb{R}^{2n+1}$ is a generalization of a fact that is already true, and proved by Viterbo, for the group of compactly supported Hamiltonian symplectomorphisms of $\mathbb{R}^{2n}$.}. On the other hand, in the contact case the most prominent example of a positive contact isotopy is given by the Reeb flow. Similarly, we say that a contact isotopy is non--negative if it moves every point in a direction positively transverse or tangent to the contact distribution, thus if and only if it is generated by a non--negative contact Hamiltonian. Using this notion we then define a relation $\leq$ on the universal cover $\widetilde{\text{Cont}}_0(M,\xi)$ of the contactomorphism group by saying that
$$
[\{\phi_t\}] \leq [\{\psi_t\}]
$$
if $[\{\phi_t\}] \cdot [\{\psi_t\}]^{-1}$ can be generated by a non--negative contact isotopy. It is easy to prove that this relation is always reflexive and transitive. However it is not always a partial order because anti--symmetry can fail, i.e. it is not always true that if $[\{\phi_t\}] \leq [\{\psi_t\}]$ and $[\{\psi_t\}] \leq [\{\phi_t\}]$ then $[\{\phi_t\}] = [\{\psi_t\}]$. If the relation $\leq$ is a partial order on $\widetilde{\text{Cont}}_0(M,\xi)$ then we say that the contact manifold $(M,\xi)$ is \textbf{orderable}. It follows immediately from the definition that $(M,\xi)$ is orderable if and only if there are no non--negative non--constant contractible loops of contactomorphisms. Eliashberg and Polterovich \cite{EP00} proved on the other hand that if for a given contact manifold there is a non--constant non--negative contractible loop of contactomorphisms then this loop can be deformed to a positive contractible loop. Thus we see that a contact manifold is orderable if and only if it does not have any positive contractible loop of contactomorphisms \cite[Criterion 1.2.C]{EP00}. The first contact manifold that was proved to be orderable is $\mathbb{R}P^{2n-1}$. As observed by Eliashberg and Polterovich \cite{EP00}, in this case the non--existence of a positive contractible loop of contactomorphisms follows from the properties of Givental's non--linear Maslov index. On the other hand Eliashberg, Kim and Polterovich \cite{EKP} proved that the sphere $S^{2n-1}$ is non--orderable, by constructing explicitely a positive contractible loop of contactomorphisms (see also Giroux \cite{Giroux} for a different description of this loop). By now many more examples are known of orderable and non--orderable contact manifolds, but it is still unclear what it is that makes a contact manifold orderable or not.

\begin{rmk}\label{remark: orderability and squeezing}
As discussed by Eliashberg, Kim and Polterovich \cite{EKP}, orderability is related to contact (non--)squeezing. The link between these two phenomena is given by the fact that if $(W,\omega)$ is an exact symplectic manifold and $M$ an hypersurface of contact type in it, then orderability of $M$ is related to the impossibility to squeeze by contact isotopies certain domains in the prequantization $W \times S^1$. More precisely, if $M$ is non--orderable then a positive contractible loop of contactomorphisms of $M$ can be used as a tool to squeeze \emph{small} domains in $W \times S^1$. For instance it is through this construction that Eliashberg, Kim and Polterovich proved, using the positive contractible loop of contactomorphisms they had found for $S^{2n-1}$, the squeezing result for domains of $\mathbb{R}^{2n} \times S^1$ of the form $B^{2n}(R) \times S^1$ for $R < 1$.
\end{rmk}

The notion of orderability is very much related to bi--invariant metrics on the contactomorphism group. A first link is given by the fact, observed by Fraser, Polterovich and Rosen \cite{FPR}, that if $\big(M,\xi = \text{ker}(\alpha)\big)$ is orderable and the Reeb flow is 1--periodic then the partial order $\leq$ gives rise to a bi--invariant metric. Indeed for an element $[\{\phi_t\}]$ of $\widetilde{\text{Cont}}_0(M,\xi)$ we can define $\nu_{FPR}^+\big([\{\phi_t\}]\big)$ as the minimal integer $k$ such that $e^k \geq [\{\phi_t\}]$, where $e = [\{e_t\}_{t\in [0,1]}]$ denotes the class represented by the Reeb flow , and $\nu_{FPR}^-\big([\{\phi_t\}]\big)$ as the maximal integer $k$ such that $e^k \leq [\{\phi_t\}]$ . Then it is easy to prove that 
$$
\lVert [\{\phi_t\}] \rVert_{FPR} := \max \Big(\lvert\nu_{FPR}^-\big([\{\phi_t\}]\big)\rvert, \lvert \nu_{FPR}^-\big([\{\phi_t\}]\big)\rvert \Big)
$$
is a conjugation--invariant norm. As observed by Borman and Zapolsky \cite{BZ}, this bi--invariant metric is always (stably) unbounded (assuming that $M$ is compact), because for any integer $k$ we have that $\lVert e^k \rVert_{FPR} = \lvert k \rvert$. Fraser, Polterovich and Rosen considered also the case of open contact manifolds and found some condition that guarantees unboundedness of the metric, in terms of intersection properties of associated sets in the symplectization of $M$.

We will now discuss how to combine the notion of orderability with the bi--invariant metric defined in Section \ref{section: discriminant metric} to obtain a bi--invariant pseudo--metric on the universal cover of the contactomorphism group of any contact manifold, which is non--degenerate if and only if the contact manifold is orderable. As discussed in \cite{CS}, Lemma \ref{lemma: decomposition} can be improved to show that it is possible to represent any given contact isotopy $\{\phi_t\}$, after perturbing in its homotopy class, as a concatenation of a finite number of embedded contact isotopies that are either non--negative or non--positive (not necessarily with alternating signs). We now define 
$\nu^+\big([\{\phi_t\}]\big)$ to be the minimal number of non--negative pieces in such a decomposition, and $\nu^-\big([\{\phi_t\}]\big)$ to be minus the minimal number of non--positive ones (note that the representative of $[\{\phi_t\}]$ that minimize the number of non--negative pieces is not necessarily the same as the one that minimizes the number of non--positive ones).  We then define the \textbf{oscillation pseudonorm} of $[\{\phi_t\}]$ as
$$
\lVert [\{\phi_t\}] \rVert_{\text{osc}} = \nu^+\big([\{\phi_t\}]\big) - \nu^-\big([\{\phi_t\}]\big)\,.
$$
For example, suppose that $\{\phi_t\}_{t \in [0,1]}$ is a positive contractible loop of contactomorphisms on a (non--orderable) contact manifold $(M,\xi)$. Then $\nu^-\big([\{\phi_t\}_{t\in [0,\frac{1}{2}]}]\big) = 0$, because $\{\phi_t\}_{t\in [0,\frac{1}{2}]}$ is positive and so the minimal number of non--positive pieces is zero. On the other hand, since $\{\phi_t\}_{t \in [0,1]}$ is contractible we have that $\{\phi_t\}_{t\in [0,\frac{1}{2}]}$ is in the same homotopy class as the reverse of $\{\phi_t\}_{t\in [\frac{1}{2},1]}$, which is a negative contact isotopy. Thus we also have $\nu^+\big([\{\phi_t\}_{t\in [0,\frac{1}{2}]}]\big) = 0$, and so the oscillation pseudonorm of $\{\phi_t\}_{t\in [0,\frac{1}{2}]}$ is zero. This shows that if $(M,\xi)$ is non--orderable then the oscillation pseudonorm on $\widetilde{\text{Cont}}_0(M,\xi)$ is degenerate, since we have found an element which is not the identity but has norm equal to zero. By similar arguments in \cite{CS} we prove the following result.

\begin{prop}
The oscillation pseudonorm on $\widetilde{\text{Cont}}_0(M,\xi)$ is non--degenerate if and only if $(M,\xi)$ is orderable. Moreover, in this case the oscillation norm is compatible with the partial order.
\end{prop}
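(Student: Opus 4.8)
The plan is to prove both assertions through a single structural observation: the oscillation pseudonorm vanishes on exactly those elements that are squeezed between the identity from both sides in the relation $\leq$. Concretely, I would first establish the two characterizations
$$
\nu^+\big([\{\phi_t\}]\big) = 0 \iff \text{id} \leq [\{\phi_t\}], \qquad \nu^-\big([\{\phi_t\}]\big) = 0 \iff [\{\phi_t\}] \leq \text{id}.
$$
Granting these, an element $g \in \widetilde{\text{Cont}}_0(M,\xi)$ has $\lVert g \rVert_{\text{osc}} = \nu^+(g) - \nu^-(g) = 0$ if and only if $\nu^+(g) = \nu^-(g) = 0$ (since $\nu^+ \geq 0 \geq \nu^-$), i.e. if and only if $\text{id} \leq g \leq \text{id}$. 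Non-degeneracy of $\lVert \, \cdot \, \rVert_{\text{osc}}$ is therefore exactly the statement that $\text{id} \leq g \leq \text{id}$ forces $g = \text{id}$. Since reflexivity and transitivity of $\leq$ always hold, and since $\leq$ is bi-invariant (it is right-invariant directly from its definition, and left-invariant because conjugation multiplies a contact Hamiltonian by the positive factor $e^{-f}$ of Lemma \ref{lemma: contact Hamiltonians}(3), hence preserves non-negativity), this condition is precisely anti-symmetry of $\leq$, i.e. orderability. In the non-orderable direction this recovers the explicit witness already exhibited above: the half $\{\phi_t\}_{t\in[0,s]}$ of a non-constant positive contractible loop (with $s$ chosen so that $\phi_s \neq \text{id}$) satisfies $\text{id} \leq [\{\phi_t\}_{t\in[0,s]}] \leq \text{id}$ and is a non-identity element of zero oscillation norm.

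To prove the two characterizations I would argue as follows. By the formula for the inverse in Lemma \ref{lemma: contact Hamiltonians}(2), time-inversion turns a non-negative contact Hamiltonian into a non-positive one, so $\text{id} \leq g$ (meaning $g^{-1}$ is non-negative) is equivalent to $g$ being non-positive, and symmetrically $g \leq \text{id}$ is equivalent to $g$ being non-negative. Now a concatenation of embedded non-positive pieces is itself generated by a non-positive Hamiltonian, so any decomposition witnessing $\nu^+(g) = 0$ produces a non-positive representative of $g$; conversely, the improved form of Lemma \ref{lemma: decomposition} decomposes a non-positive contact isotopy, after perturbation in its homotopy class, into embedded pieces that can be taken non-positive, giving a decomposition with no non-negative piece and hence $\nu^+(g) = 0$. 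The analogous argument with signs reversed handles $\nu^-$, and together with the first paragraph this settles the non-degeneracy equivalence.

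For compatibility I would use bi-invariance of both the order and the pseudonorm to reduce the claim $g_1 \leq g_2 \leq g_3 \Rightarrow d_{\text{osc}}(g_1,g_2) \leq d_{\text{osc}}(g_1,g_3)$, where $d_{\text{osc}}(a,b) := \lVert a^{-1}b \rVert_{\text{osc}}$, to the inequality
$$
\text{id} \leq u \leq v \implies \lVert u \rVert_{\text{osc}} \leq \lVert v \rVert_{\text{osc}},
$$
obtained by setting $u = g_1^{-1} g_2$ and $v = g_1^{-1} g_3$ (left-invariance of $\leq$ gives $\text{id}\leq u\leq v$). Under $\text{id} \leq u \leq v$ both $u$ and $v$ are non-positive, so $\nu^+(u) = \nu^+(v) = 0$ and the two norms reduce to $-\nu^-$. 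Writing $n := uv^{-1}$, the hypothesis $u \leq v$ says $n$ is non-negative and $u = nv$; concatenating a purely non-negative decomposition of $n$ with a minimal non-positive decomposition of $v$, left-translated by the endpoint of the $n$-representative, yields a decomposition of $u$ whose number of non-positive pieces does not exceed that of $v$. Here left-translation by a fixed contactomorphism rescales each generating Hamiltonian by a positive function (so preserves the sign of every piece) and conjugates the relevant $\phi_t\phi_{t'}^{-1}$ (so preserves embeddedness, discriminant points being conjugation-invariant). Hence $-\nu^-(u) \leq -\nu^-(v)$, i.e. $\lVert u\rVert_{\text{osc}} \leq \lVert v\rVert_{\text{osc}}$, which is the desired monotonicity.

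The genuinely non-elementary input is the sign-definite refinement of Lemma \ref{lemma: decomposition} together with the fact that concatenation and left-translation of sign-definite embedded pieces remain sign-definite and embedded; the former is granted from \cite{CS}, and the latter rests on the two facts already recorded in the text, that a contactomorphism rescales a generating Hamiltonian by a positive function and that discriminant points are conjugation-invariant. I expect the only delicate point to be verifying that a \emph{globally} non-positive isotopy can indeed be cut into embedded pieces that are still non-positive, so that no spurious non-negative piece is forced by the perturbation; but this is exactly the content of the improved lemma and requires no new idea beyond it. Everything else is the algebraic bookkeeping above.
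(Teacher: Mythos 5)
Your proof follows the same skeleton as the paper's (which is the argument of \cite{CS}): identify the norm--zero elements with those admitting both a non--negative and a non--positive representative, so that non--degeneracy becomes anti--symmetry of $\leq$, invoke the Eliashberg--Polterovich positive contractible loop for the non--orderable direction, and deduce compatibility from bi--invariance plus concatenation of decompositions. However, there is a genuine gap in the step everything else leans on. You claim that a merely non--positive isotopy can be decomposed, after perturbation rel endpoints in its homotopy class, into embedded non--positive pieces, and you assert that this is ``exactly the content of the improved Lemma \ref{lemma: decomposition}''. It is not: the improved lemma only produces embedded pieces each of which is non--negative \emph{or} non--positive, and nothing in its statement prevents the perturbation from introducing pieces of the wrong sign. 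Moreover the sign--preserving refinement cannot be obtained by the same (generic perturbation) mechanism: if $H_t \geq 0$, every zero of $H_t$ is an interior minimum, hence a point where the contact vector field vanishes and where the conformal factor has zero derivative; consequently any point lying in the zero set of $H_t$ for a whole subinterval of time is a discriminant point of $\phi_t\phi_{t'}^{-1}$ for all $t\neq t'$ in that subinterval, so no piece containing such a subinterval can be embedded, and one cannot perturb a non--negative Hamiltonian to make its zero set generic while keeping it non--negative. So the biconditional characterizations $\nu^{\pm}(g)=0 \iff g \lessgtr \text{id}$ are, in their hard direction, a strictly stronger statement than the quoted lemma and require their own proof.

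Where this matters: the equivalence ``non--degenerate iff orderable'' survives, because one direction uses only the easy implications ($\nu^{\pm}(g)=0$ implies $g$ has a sign--definite representative, since a concatenation of embedded non--positive pieces is a non--positive path), and for the other direction the criterion of \cite{EP00} quoted in Section \ref{section: orderability and the oscillation metric} provides a \emph{strictly} positive contractible loop; for strictly positive or negative paths on a compact manifold the sign--preserving embedded decomposition does hold, by a short--time compactness argument (a returning orbit would be a short positively transverse loop, which the Darboux--chart area estimate excludes) --- this is precisely the fact the paper itself uses when it asserts $\nu^-=0$ for the half loop, and your fallback witness uses exactly this. So that part of your proof is repairable by replacing the appeal to the improved lemma with strictness. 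The compatibility argument, by contrast, really depends on the unproved claim: there $u$, $v$ and $n=uv^{-1}$ are only non--strictly monotone, you need $\nu^+(u)=\nu^+(v)=0$ and a ``purely non--negative decomposition of $n$'', and no strictness is available to fall back on. As written, the ``moreover'' clause is not established; you would need either to prove the sign--preserving refinement of Lemma \ref{lemma: decomposition} for non--strict monotone isotopies, or to reproduce the argument of \cite{CS} that circumvents it.
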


As proved in \cite{CS}, the oscillation (pseudo)norm is bounded for $S^{2n-1}$ and unbounded for $\mathbb{R}P^{2n-1}$. Unboundedness for $\mathbb{R}P^{2n-1}$ follows from the same proof as in the case of the discriminant metric, combined with the fact that the non--linear Maslov index is monotone with respect to the partial order of Eliashberg and Polterovich.

Note that the discriminant metric is non--degenerate on any contact manifold just by construction. On the other hand, non--degeneracy of the oscillation metric is equivalent to orderability of the contact manifold, which is a deep and hard to prove property. This is similar to what happens in the symplectic case for the Hofer metric, whose non--degeneracy is a deep result proved in \cite{Hofer} and \cite{Lalonde-McDuff} using hard methods of symplectic topology. As discussed in \cite{Lalonde-McDuff}, non--degeneracy of the Hofer metric is deeply related to the symplectic non--squeezing theorem. Something similar also happens for our metric. Indeed, non--degeneracy of the oscillation metric is equivalent to orderability, which, as mentioned in Remark \ref{remark: orderability and squeezing}, is deeply related to the contact non--squeezing phenomenon. It would be interesting to explore further this analogy, and find concrete links between the Hofer and oscillation norms.


\section{Discussion}\label{section: Discussion}

We have seen above the construction of several bi--invariant metrics on the contactomorphism group. It is still an open question to understand whether these metrics (in the cases when they can be defined on the same contact manifold) are equivalent or not. Note that the discriminant metric is certainly not equivalent to the oscillation (pseudo)metric in general, since the oscillation pseudometric is degenerate on non--orderable contact manifolds while the discriminant metric is always non--degenerate. On the other hand, in order to compare for example the oscillation metric to the metric of Fraser, Polterovich and Rosen, as a first step it would be important to understand the following question. Let $\big(M,\xi = \text{ker}(\alpha)\big)$ be an orderable compact contact manifold with 1--periodic Reeb flow, and denote by $e = [ \{e_t\}_{t \in [0,1]}]$ the class in $\widetilde{\text{Cont}}_0(M,\xi)$ represented by the Reeb flow. As mentioned above, we have that $\lVert e^k \rVert_{FPR} = \lvert k \rvert$. On the other hand, is there some proportionality between $\lVert e^k \rVert_{osc}$ and $\lvert k \rvert$? In the case when $M = \mathbb{R}P^{2n-1}$ we know that, for every $k$, $\lvert k \rvert \leq \lVert e^{4k} \rVert_{osc} \leq 4 \lvert k \rvert$ and thus in this case we have $ \frac{1}{4} \lVert e^{4k} \rVert_{FPR} \leq \lVert e^{4k} \rVert_{osc} \leq \lVert e^{4k} \rVert_{FPR}$. However the proof of this fact that is given in \cite{CS}, using the work of Givental \cite{Givental - Nonlinear Maslov index}, is very specific of the case of projective space (in particular, the topology of $\mathbb{R}P^{2n-1}$ plays a role, since a key ingredient in the proof is the fact that the cohomology ring of $\mathbb{R}P^{2n-1}$ with $\mathbb{Z}_2$--coefficients is generated by one element). Note that in general for a compact contact manifold with 1--periodic Reeb flow there does not need to be any proportionality between $\lVert e^k \rVert_{osc}$ and $\lvert k \rvert$, since for example we have seen that on the sphere we always have $\lVert e^k \rVert_{osc} \leq 4$. It seems possible that even in the orderable case there might be no proportionality in general between $\lVert e^k \rVert_{osc}$ and $\lvert k \rvert$, hence no equivalence between the oscillation norm and the norm of Fraser, Polterovich and Rosen.

As mentioned at the end of the previous section, another important question would be to understand whether there is some concrete connection between the oscillation norm and the Hofer norm, for example in the case when the contact manifold is the prequantization of a symplectic manifold. A question that might help to find such connection is the following. Let $\lVert \, \cdot \, \rVert_{\alpha}$ be the Hofer--like norm on $\widetilde{\text{Cont}}_0(M,\xi)$ that we discussed in Section \ref{section: Hofer-like lengths}, with respect to a certain contact form $\alpha$ for $\xi$ (note that any other choice of a contact form for $\xi$ gives rise to an equivalent norm, see \cite{MSpaeth, Egor}). As we have seen in Section \ref{section: Hofer-like lengths}, this norm is not conjugation--invariant. However for a class $[\{\phi_t\}]$ in $\widetilde{\text{Cont}}_0(M,\xi)$ we can try to define
$$
\lVert [\{\phi_t\}] \rVert_{\text{conj},\alpha} := \sup_{\psi} \lVert [\{\psi \phi_t \psi^{-1}\}] \rVert_{\alpha} \,.
$$
In \cite{Egor} Shelukhin raised the question of whether, at least for certain contact manifolds, the above expression might be well--defined (i.e. whether it might be true that the supremum is always a finite real number). Note that no examples nor counterexamples are known at the moment for this question. If on some contact manifold the above expression turns out to be well--defined  then it gives rise to a conjugation--invariant norm and so, by Proposition \ref{proposition: Polterovich}, it must be discrete. In this case it would be interesting to understand what happens when $\lVert \, \cdot \, \rVert_{\text{conj},\alpha}$ jumps. Is a jump of $\lVert \, \cdot \, \rVert_{\text{conj},\alpha}$ also related to existence of discriminant points? If so, does this help to find a relation between the Hofer and the oscillation norms?
 
As discussed by Polterovich \cite{Polterovich - book}, one of the ideas of why it is good to have a bi--invariant metric on the Hamiltonian group is that we want to study the dynamics of Hamiltonian flows. Instead of looking at all orbits of a given Hamiltonian flow, we can see the flow as a single geometric object, a curve in the Hamiltonian group. We can then hope that geometric properties of this curve (with respect to our bi--invariant metric) will reflect some aspects of the dynamics of the flow. See \cite{Polterovich - book} for instances of how this idea can be exploited, in particular in connection to the relation between growth and dynamics (Chapter 8) and to ergodic theory (Chapter 11). In the contact case, can we use the bi--invariant metrics described in this paper in a similar way? 

A second reason why it is useful to have bi--invariant metrics on the contactomorphism group is that, similarly to the symplectic case, they can be used to define notions of size (or \emph{contact capacity}) for subsets of a given contact manifold. Consider for instance the oscillation metric. Define the capacity $c(\mathcal{U})$ of a domain $\mathcal{U}$ in a given contact manifold to be the supremum of the values $\nu^+ \big([\{\phi_t\}]\big)$ for all contact isotopies $\{\phi_t\}$ supported in $\mathcal{U}$, and its displacement energy $E(\mathcal{U})$ to be the infimum of the values of $\lVert \{\psi_t\}\rVert_{osc}$ for all contact isotopies $\{\psi_t\}$ displacing $\mathcal{U}$, i.e. such that $\psi_1(\mathcal{U}) \cap \mathcal{U} = \emptyset$. Note that both the capacity and the displacement energy are monotone with respect to inclusion, and invariant by the action of the contactomorphism group. We expect that, in analogy to the symplectic case, the energy--inequality capacity $c(\mathcal{U}) \leq E(\mathcal{U})$ should hold for all domains (proving in particular that the capacity is always well--defined). In the symplectic case the energy--capacity inequality is proved using Gromov's non--squeezing theorem, and it implies non--degeneracy of the Hofer metric \cite{Lalonde-McDuff}. As we have discussed above, in the contact case we also have a relation (via orderability) between the contact non--squeezing phenomenon and non--degeneracy of the oscillation metric. It would be interesting to understand whether, similarly to the symplectic case, some role in this relation is also played by the energy--capacity inequality. Note that in the symplectic case Lalonde and McDuff \cite{Lalonde-McDuff, Lalonde-McDuff - Local non-squeezing} also studied, more generally, a link between length--shortening of paths in the Hamiltonian group (with respect to the Hofer metric) and symplectic non--squeezing. In the contact case, do we also have a similar link between length--shortening of paths in the contactomorphism group, for instance with respect to the oscillation metric, and the contact (non--)squeezing phenomenon? Note that the link found by Eliashberg, Kim and Polterovich \cite{EKP} between orderability and (non--)squeezing might be interpreted as a first step in this direction. Indeed a positive contractible loop of contactomorphisms can be seen as a curve in the contactomorphism group that can be shortened with respect to the oscillation length: the loop itself has positive length, but it can be shortened in its homotopy class to the constant loop, which has zero length. As discussed by Eliashberg, Kim and Polterovich, a homotopy between a positive contractible loop of contactomorphisms and the constant loop induces a contact squeezing between some associated domains. It would be interesting to understand if this result is indeed the special case of a more general link between length--shortening with respect to the oscillation metric and contact (non--)squezing, and to explore analogies and differences with respect to the corresponding picture in the symplectic case.

In the symplectic case, geodesics with respect to the Hofer norm have been studied by many authors, see in particular \cite{Lalonde-McDuff - Hofer's geometry, BP - Geodesics, Siburg, Long, Ustilovsky, MS - HZ capacity and, KL, KS - Shortening}. In the contact case the question, with respect to any of the metrics described in this article, is still completely unexplored. Other problems that have been studied for the Hofer norm but are still open in the contact case are, for instance, the problem of studying the length spectrum of $(M,\xi)$, i.e. the set of values $\lVert [\{\phi_t\}] \rVert$ for elements $ [\{\phi_t\}]$ of $\pi_1\big(\text{Cont}(M,\xi)\big)$ (cf. in the symplectic case \cite[Chapter 9]{Polterovich - book}) and the problem of studying all contactomorphisms of a given contact manifold $(M,\xi)$ seen as isometries of $\text{Cont}_0(M,\xi)$ (cf. in the symplectic case \cite[Chapter 14]{Polterovich - book} and \cite{Lalonde - Polterovich}).

In \cite{CS} we also define the discriminant and oscillation lengths for Legendrian isotopies. Indeed we show that, after deforming it in the same homotopy class, every Legendrian isotopy can be written as the concatenation of a finite number of embedded monotone pieces, and define the discriminant and oscillation lengths as in the case of contactomorphisms. Let $\mathcal{L}(M \times M \times \mathbb{R}, \Delta)$ be the space of Legendrian submanifolds of the contact product $M \times M \times \mathbb{R}$ which are Legendrian isotopic ot the diagonal $\Delta$, and consider the embedding
$$
\text{Cont} (M,\xi) \rightarrow \mathcal{L}(M \times M \times \mathbb{R}, \Delta)
$$
that sends a contactomorphism to its contact graph. This embedding preserves the (discriminant or oscillation) length of paths in the two spaces. However the induced map
$$
\widetilde{\text{Cont}}_0 (M,\xi) \rightarrow \widetilde{\mathcal{L}}(M \times M \times \mathbb{R}, \Delta)
$$
does not necessarily preserve the discriminant or oscillation lengths, because the infimum on the right hand side is taken on a larger set (not all Legendrian isotopies are graphs). In analogy to what was proved by Ostrover \cite{Ostrover - A comparison} for the Hofer norm, we expect the above map to be far from being an isometric embedding. In general the discriminant and oscillation lengths for Legendrian isotopies seem to behave quite differently from the case of contactomorphisms \footnote{I would like to thank Emmy Murphy for first opening my eyes on this.}. For instance, although the same arguments as in the case of contactomorphisms show that the Legendrian discriminant and oscillation lengths are unbounded on $\mathbb{R}^{2n} \times S^1$ and $\mathbb{R}P^{2n-1}$, in the Legendrian case we do not have any example where we can prove that the lengths are bounded. Even for the standard Euclidean space $\mathbb{R}^{2n+1}$ it is not clear to us that this should be the case.

Finally, in \cite{CN} Chernov and Nemirovski discussed a link between orderability and the notion of causality in Lorentz geometry. It would be interesting to explore whether the discriminant and oscillation lengths for contact and Legendrian isotopies, and the known results about them, also have some relevant interpretations in this context.


\end{document}